\author{Jesse Racicot
  \and Giovanni Rosso\thanks{Partly funded by the FRQNT grant 2019-NC-254031 and the NSERC grant RGPIN-2018-04392.}
  }
\title[Domination in Kn{\"o}del graphs]{Domination in Kn{\"o}del graphs}
\affiliation{
  Concordia Univeristy, Qu\'ebec, Canada}
\keywords{Domination, Gossiping problem, Kn{\"o}del graph}
\newtheorem{theorem}{Theorem}[section]
\newtheorem{conjecture}[theorem]{Conjecture}
\newtheorem{proposition}[theorem]{Proposition}
\newtheorem{corollary}[theorem]{Corollary}
\newtheorem{lemma}[theorem]{Lemma}
\newtheorem{definition}[theorem]{Definition}
\newtheorem{question}{Question}
\theoremstyle{definition}
\newtheorem{case}{Case}
\begin{document}
\publicationdetails{24}{2022}{1}{16}{7158}
\maketitle
\begin{abstract}
  Given a graph $G$ and an integer $k$, it is an $NP$-complete problem to decide whether $G$ has a dominating set of size at most $k$. In this paper we study this problem for the Kn{\"o}del Graph on $n$ vertices using elementary number theory techniques. In particular, we show an explicit upper bound for the domination number of the Kn{\"o}del Graph on $n$ vertices any time that we can find a prime number $p$ dividing $n$ for which $2$ is a primitive root.
\end{abstract}

\section{Introduction}

Given a graph $G = (V, E)$, a subset $D \subseteq V$ is said to be a \emph{dominating set} if every vertex in $V$ is in $D$ or adjacent to some vertex in $D$.  A dominating set of minimum size is called a $\gamma$-set and the size of any $\gamma$-set is denoted by $\gamma(G)$.  The problem of finding a minimum dominating set is a computationally difficult optimization problem.  In particular, given a graph $G$ and an integer $k$, determining whether $\gamma(G) \leq k$ is $NP$-complete \cite{NPComplete}.

The Kn{\"o}del graph was implicitly defined in \cite{KnodelGossip}.  Therein, Walter Kn{\"o}del answers the following question:  Given $n$ people, each with a unique message they wish to share with the others where sharing the information with a peer requires one discrete time unit, what is the minimum number of time units required so that every person knows every message?  Kn{\"o}del describes a protocol, referred to as \emph{gossiping} in the literature, which gives rise to the structure of the Kn{\"o}del graph, e.g., the people are vertices and for any given person, the list of people they inform throughout the protocol are their neighbors in the graph.  The Kn{\"o}del graph has been a topic of interest since and the interested reader can see \cite{KnodelSurvey} for an in-depth survey.  Here we present the definition of the Kn{\"o}del graph used by Bermond et al., see \cite{ModifiedKnodelDef} which is equivalent to the original definition of the Kn{\"o}del graph.  We should mention that this particular definition originally appeared in \cite{OriginalModifiedKnodelDef}. 
All logarithms in this paper are in base $2$.
\begin{definition}\label{def : Knodel Graph}
Let $n \geq 6$ be even and let $KG_{n} = (V, E)$ denote the Kn{\"o}del graph on $n$ vertices where 

$$ V = \{ 0, 1, 2, ..., n - 1 \} $$ 

and

$$ E = \{ \{ x, y \} \mid x + y \equiv 2^{t} - 1 (\bmod \: n) \} $$

$$ where \: t = 1, 2, ..., \lfloor \log n \rfloor. $$

\end{definition}
Observe that every vertex in $KG_n$ has degree $\lfloor \log n \rfloor$ although it is worth mentioning that there is a more general definition of the Kn{\"o}del graph.  By taking some integer $1 \leq k \leq \lfloor \log n \rfloor$, possibly dependent on $n$, we can allow the value $t$ (given in the preceding definition) to range from $1, 2, ..., k$ so that one obtains a graph where every vertex has degree $k$.

Domination in Kn{\"o}del graphs has been studied in \cite{mojdeh2018domination, xueliang2009domination} for the special case where the graph has small constant degree.  The study of different variants of domination have also appeared in \cite{mojdeh2019domination, varghese2018power, jafari2021total}.  The Kn{\"o}del graphs are of particular interest in the area of \emph{broadcasting}, a topic closely related to gossiping, since $KG_n$ is known to be a broadcast graph \cite{ModifiedKnodelDef}.  Within \cite{HHandAL}, the authors provide an explicit application of dominating sets to broadcasting. In particular, for a given value of $n$ they construct a sparse broadcast graph on $n + 1$ vertices by finding a dominating set of $KG_{n}$ satisfying certain properties and joining an additional vertex to every vertex in this dominating set.  In fact, the results of Section~\ref{sec:upperbounds} are heavily inspired by said paper.

The following paper is centered around domination in $KG_n$ for various values of $n$.  Our main results are Theorems~\ref{thm:1} and \ref{thm:2}.  We fix a prime number $p$ dividing $n$ and we suppose that $2$ is a primitive root modulo $p$ (see Definition~\ref{def:prim root}); in the first theorem we prove that if $p \leq  \lceil \log n \rceil$ then  $\gamma(KG_n) \leq \frac{n}{p}$. 
In the second theorem we suppose that $2$ is a primitive root modulo $p^k$, where $p^k$ divides $n$ and $\phi(p^k) < \lceil \log n \rceil$, for Euler totient function $\phi$, see the introduction of Section \ref{sec:upperbounds}. Then we show that $\gamma(KG_n) \leq \frac{2n}{p^k}$.
Both results are constructive, as we exhibit an explicit dominating set. \\

Note that it is important to have both results, as it is not expected that $2$ is a primitive root modulo $p^2$ any time that $2$ is a primitive root modulo $p$. Wieferich primes are defined as primes for which $p^2 \mid (2^{p-1}-1)$. Only Wieferich primes have been found, after checking all primes smaller than $10^{15}$: $1093$ and $3511$.  It is conjectured that there are infinitely Wieferich primes and that they are very rare. This conjecture is motivated by the heuristic explained below; in particular, the conjectured  density is $\frac{\log(\log(X))}{X}$. This means that, given a positive real number $X$, among all prime numbers smaller than $X$, roughly $\log(\log(X))$ are Wieferich primes.

Note that for neither of the two known Wieferich primes $2$ is a primitive root, but we now explain why it is reasonable to expect that Wieferich primes for which $2$ is a primitive root modulo $p$ are infinite; we thank A.~Granville for explaining the following heuristic to us.

We make the following two assumptions: that the chances of $2$ being a primitive root modulo $p$ is uniformly distributed ({\it i.e.} it is $\phi(p-1)/(p-1)$); and that when $2$ is a primitive root modulo $p$ then $(2^{p-1}-1)/p$ is also uniformly distributed modulo $p$. Note that if $(2^{p-1}-1)/p$ is $0$ modulo $p$, then $p$ is  a Wieferich prime and $2$ is not a primitive root modulo $p$. 
Hence, if we assume also that these two events are independent, the probability that $2$ is a primitive root modulo $p$ and that $p$ is a Wieferich prime is then 
\[
\frac{\phi(p-1)}{p(p-1)}. 
\]

Let us denote by $\pi(X)$ the number of primes smaller than $X$, for $X$ a positive real number. We thank again A.~Granville for the proof of the following lemma:
\begin{lemma}
For every $\delta >0$ there is $c >0$ such that $\phi(p-1)/(p-1) <c$ for only $\delta \pi(X)$ primes of size smaller than $X$ and then we have
\[
\sum_{p<X} \frac{\phi(p-1)}{p(p-1)} > c(1-\delta)\sum_{p<X} \frac{1}{p}= c' \log(\log(X)) + O(1).
\]
\end{lemma}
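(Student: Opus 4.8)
The plan is to prove the lemma in two independent pieces: a counting/density statement about primes $p$ for which $\phi(p-1)/(p-1)$ is bounded below, and then a comparison of the resulting sum with the classical sum $\sum_{p<X} 1/p$ together with Mertens' theorem to extract the $\log\log X$ asymptotic. I would state up front that the quantity $\phi(p-1)/(p-1) = \prod_{q \mid p-1}(1 - 1/q)$, where the product runs over primes $q$ dividing $p-1$, so that bounding it below amounts to controlling how many small prime factors $p-1$ can have.

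First I would establish the density claim: for every $\delta > 0$ there is a $c > 0$ so that the set of primes $p < X$ with $\phi(p-1)/(p-1) < c$ has size at most $\delta \pi(X)$. The natural approach is a first-moment (averaging) argument. One shows that the average value of $\phi(p-1)/(p-1)$ over primes $p < X$ converges to a positive constant (this is a standard result, essentially $\frac{1}{\pi(X)}\sum_{p<X}\phi(p-1)/(p-1) \to A$ for some Artin-type constant $A>0$). Since the quantity is bounded in $[0,1]$, a value that is small on a non-negligible proportion of primes would drag the average down; more cleanly, I would use the fact that the distribution function of $\phi(p-1)/(p-1)$ has no mass at $0$, so for any prescribed $\delta$ one can choose the threshold $c$ small enough that only a $\delta$-proportion of primes fall below it. I expect this is the step where one invokes known analytic number theory (Mertens-type estimates for $\sum_{q \le y} 1/q$ controlling the tail of the product, or a sieve bound on primes $p$ with $p-1$ divisible by all small primes up to $y$), rather than proving it from scratch.

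Given the density statement, the second half is a direct lower bound on the sum. Restricting attention to the $(1-\delta)\pi(X)$ primes where $\phi(p-1)/(p-1) \ge c$, I would write
\[
\sum_{p<X} \frac{\phi(p-1)}{p(p-1)} \ge \sum_{\substack{p<X \\ \phi(p-1)/(p-1) \ge c}} \frac{\phi(p-1)}{p(p-1)} \ge c \sum_{\substack{p<X \\ \phi(p-1)/(p-1) \ge c}} \frac{1}{p}.
\]
The subtlety is that I want to compare this restricted sum to the full sum $\sum_{p<X} 1/p$. Since the excluded primes number at most $\delta\pi(X)$ and their reciprocals are dominated by the smallest reciprocals available, one argues that removing a $\delta$-proportion of the primes removes at most a $\delta$-proportion of the mass of $\sum 1/p$ in the relevant asymptotic sense, yielding the stated bound $c(1-\delta)\sum_{p<X} 1/p$. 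Finally I would invoke Mertens' second theorem, $\sum_{p<X} 1/p = \log\log X + M + o(1)$, to rewrite the right-hand side as $c'\log\log X + O(1)$ with $c' = c(1-\delta) > 0$.

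The main obstacle is the density claim in the first step: making rigorous the assertion that $\phi(p-1)/(p-1)$ is bounded away from zero for all but a $\delta$-proportion of primes. This is genuinely an analytic fact about the typical multiplicative structure of $p-1$, and the honest route is to cite (or sketch via Mertens and a Brun–Titchmarsh / sieve bound) that the proportion of primes $p \le X$ with $\phi(p-1)/(p-1)$ small tends to $0$ as the threshold tends to $0$, uniformly in $X$. Everything afterward is bookkeeping: the comparison of restricted and full reciprocal-prime sums and the substitution of Mertens' estimate are routine, so I would spend the bulk of the argument carefully justifying the choice of $c$ given $\delta$ and only gesture at the final asymptotic manipulation.
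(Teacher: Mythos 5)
The heart of the lemma is the lower-tail bound: for every $\delta>0$ there is $c>0$ such that $\#\{p<X : \phi(p-1)/(p-1)<c\}\le\delta\pi(X)$. Your primary argument for this --- that the average of $\phi(p-1)/(p-1)$ over primes converges to a positive constant, and that small values on a non-negligible proportion of primes would \quo{drag the average down} --- does not work. A $[0,1]$-valued quantity with mean $A>0$ can perfectly well vanish (or be arbitrarily small) on a fixed positive proportion $1-A$ of the primes; a lower bound on the first moment gives no control over the lower tail. Your fallback (\quo{the distribution function has no mass at $0$}) is a restatement of the desired conclusion, not a proof. The mechanism the paper uses, and the idea missing from your sketch, is to bound the first moment of the \emph{reciprocal}: starting from $\frac{n}{\phi(n)}=\sum_{d\mid n}\mu^2(d)/\phi(d)$, truncating to squarefree $d<x^{1/2+\epsilon}$ (at the cost of a factor $2$), swapping the order of summation, and applying the Brun--Titchmarsh theorem to $\sum_{p\le x,\ p\equiv 1 \bmod d}1$, one obtains $\sum_{p\le x}\frac{p-1}{\phi(p-1)} < 23\,\pi(x)$ since $\sum_{d\ge 1}\mu^2(d)/\phi(d)^2=\prod_p\bigl(1+1/(p-1)^2\bigr)$ converges. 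Markov's inequality then gives $\#\{p\le x:\phi(p-1)/(p-1)<c\}<23c\,\pi(x)$, and taking $c=\delta/23$ finishes the density claim. You do mention \quo{a Brun--Titchmarsh / sieve bound} as a possible route, but you never identify that it must be applied to the reciprocal moment, which is the entire content of the step.

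A secondary problem lies in your comparison of the restricted sum with $\sum_{p<X}1/p$. You assert that deleting $\delta\pi(X)$ primes removes at most a $\delta$-fraction of the mass because \quo{their reciprocals are dominated by the smallest reciprocals available}; this is backwards. The danger is precisely that the deleted primes are the \emph{smallest} ones, whose reciprocals carry essentially all of the mass: if $\pi(Y)=\delta\pi(X)$ then already $\sum_{p<Y}1/p=\log\log X+O(1)$. What rescues the step is that the tail bound $\#\{p\le x:\phi(p-1)/(p-1)<c\}<23c\,\pi(x)$ holds uniformly for every $x\le X$, so the excluded set is sparse at all scales, and partial summation yields $\sum_{p\ \text{excluded}}1/p\le \delta\log\log X+O(1)$. (The paper leaves this bookkeeping implicit as well, but the justification you offer is the wrong one, so it should not be presented as routine.)
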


\begin{proof} 
Note that \[\frac{n}{\phi(n)} = \sum_{d\mid n} \frac{\mu^2(d)}{\phi(d)} \leq 2 \sum_{d|n, d<n^{1/2+\epsilon}} \frac{\mu^2(d)}{\phi(d)}\]
and so 
\[\sum_{p \leq x} \frac{(p-1)}{\phi(p-1)} \leq  2 \sum_{d<x^{1/2+\epsilon}} \frac{\mu^2(d)}{\phi(d)} \sum_{p \leq x, p \equiv 1 \bmod d} 1\]
Then by Brun--Titchmarsh theorem this is less than
\begin{align*}
    (8+\epsilon) \sum_{d<x^{1/2+\epsilon}} \frac{\mu^2(d)}{\phi(d)} \frac{\pi(x)}{\phi(d)} < & (8+\epsilon)  \pi(x) \sum_{d \geq 1} \frac{\mu^2(d)}{\phi(d)^2} \\
    = (8+\epsilon)  \pi(x) \prod_p (1+1/(p-1)^2) < 23 \pi(x).
\end{align*}
Therefore $\left\{ p \leq x: \phi(p-1)/(p-1)< c\right\} < 23 c \pi(x)$. 
\end{proof} 

As the latter sum $\sum_{p<X} \frac{1}{p}$ diverges, being the value of Riemann $\zeta$ function at $1$, the former diverges too.  As the sum $\sum_{p<X} \frac{\phi(p-1)}{p(p-1)} $ should approximate the number of primes $p$ smaller than $X$ for which $2$ is a primitive root modulo $p$ and $p$ is a Wieferich prime, the divergence of this sum hints to an infinitude of such primes.\\

The paper is organized in the following manner.  Section $2$ is the core of the paper and presents both the state-of-the-art results on $\gamma(KG_n)$ and our improved upper bounds on $\gamma(KG_n)$ for a very general class of even values of $n$.  
Section $3$ will discuss necessary conditions to achieve the best theoretical lower bound on $\gamma(KG_n)$ and Section 4 concludes the paper with conjectures about $\gamma(KG_n)$ as well as possible directions for further research.

\section{Upper bounds on $\gamma(KG_n)$}\label{sec:upperbounds}

The following section presents some upper bounds on $\gamma(KG_n)$ that apply to a large class of even integers.  We state a few definitions and preliminary results from elementary number theory as they will be heavily used in the arguments that follow.

Let $n > 1$ be a positive integer.  The number of positive integers less than $n$ that are relatively prime to $n$ is given by \emph{Euler's totient function}, denoted by $\phi(n)$.  If $\mathrm{gcd}(a, n) = 1$, it is well known that $a^{\phi(n)} \equiv 1 \: (\bmod \: n)$ but $\phi(n)$ is not necessarily the smallest integer for which this congruence holds.  Thus, we define the \emph{order} of $a$ modulo $n$ as the smallest positive integer $k$ such that $a^k \equiv 1 \: (\bmod \: n)$.  This prompts the following definition.

\begin{definition}\label{def:prim root}
Let $n > 1$ be a positive integer and let $a$ be an integer such that $\mathrm{gcd}(a, n) = 1$.  If a has order $\phi(n)$ modulo $n$, then $a$ is said to be a primitive root modulo $n$.  
\end{definition}

Note that most integers don't have primitive roots; a primitive root exists only when $n=p^{k}$ or $n=2p^{k}$, with $p$ an odd prime, or $n=2,4$. 

If we let $a_1, a_2, ... , a_{\phi(n)}$ be the positive integers less than $n$ and relatively prime to $n$ then whenever $a$ is a primitive root of $n$, we have that $a, a^2 , ..., a^{\phi(n)}$ are congruent modulo $n$ to $a_1, a_2, ... , a_\phi(n)$ in some order.  Of particular usefulness for this paper is the fact that $\phi(p) = p - 1$ and, more generally, $\phi(p^k) = p^k - p^{k-1}$ for any prime $p$ and $k \geq 1$.

With the preliminaries out of the way we are ready to investigate $\gamma(KG_n)$.  We state the following  result proved in \cite{HHandAL}.

\begin{theorem}\label{thm: KnodelOriginal}
Let $n$ be even such that $\lceil \log{n} \rceil = p$ where $p$ is an odd prime.  Moreover, suppose that $p$ divides $n$ and that $2$ is a primitive root modulo $p$. It then follows that 

$$ \gamma(KG_n) = \frac{n}{p}. $$

\end{theorem}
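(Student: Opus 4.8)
The plan is to prove the two inequalities $\gamma(KG_n) \ge n/p$ and $\gamma(KG_n) \le n/p$ separately. Since $p \mid n$ with $p$ an odd prime, $n$ is not a power of $2$, so $\lceil \log n \rceil = p$ forces $\lfloor \log n \rfloor = p-1$; thus $KG_n$ is regular of degree $p-1$. The lower bound is then immediate from the elementary fact that a single vertex dominates at most $1 + \Delta$ vertices, giving $\gamma(KG_n) \ge n/(1+(p-1)) = n/p$. It is worth noting that, because this bound is attained, any dominating set of size $n/p$ must automatically be a \emph{perfect} (efficient) dominating set, i.e. the closed neighborhoods of its elements partition $V$; this observation is what guides the search for an explicit set.

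For the upper bound I would exhibit the residue class $D = \{\, v \in V : v \equiv (p-1)/2 \pmod p \,\}$, which has exactly $n/p$ elements since $p \mid n$. The neighbors of a vertex $x$ are $2^t - 1 - x \pmod n$ for $t = 1, \ldots, p-1$, so the heart of the argument is to analyze these neighbors modulo $p$. Here the hypothesis that $2$ is a primitive root modulo $p$ is decisive: as $t$ runs over $1, \ldots, p-1$ the powers $2^t$ run over all nonzero residues mod $p$, hence the neighbor labels $2^t - 1 - x \pmod p$ run over every residue except $-1 - x$. The residue $(p-1)/2$ is chosen precisely because it is the unique fixed point of the involution $c \mapsto -1 - c$ on $\mathbb{Z}/p$: for a dominator $d$ of this label the single missing neighbor-residue equals $d$'s own label, so the closed neighborhood $N[d]$ meets each residue class modulo $p$ exactly once.

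To finish I would show $D$ dominates. A vertex $v \in D$ dominates itself; for $v \notin D$, with label $c \not\equiv (p-1)/2$, I need $d \in D$ and $t \in \{1,\ldots,p-1\}$ with $v + d \equiv 2^t - 1 \pmod n$. Reducing the required relation $d \equiv 2^t - 1 - v$ modulo $p$ and imposing $d \equiv (p-1)/2$ yields $2^t \equiv 1 + v + (p-1)/2 \pmod p$, whose right-hand side is nonzero exactly because $c \ne (p-1)/2$; primitivity of $2$ then supplies a unique such $t$, and $d = (2^t - 1 - v) \bmod n$ is a genuine neighbor of $v$ lying in $D$. Hence $D$ is a dominating set of size $n/p$, which together with the lower bound gives $\gamma(KG_n) = n/p$. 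A short counting/bijection argument, comparing a domain and codomain both of size $n$, upgrades this to the assertion that $D$ is in fact efficient. I expect the only real obstacle to be the creative step of identifying the correct residue class $(p-1)/2$; once the neighbor-label computation is set up, the role of the primitive-root hypothesis in inverting $t \mapsto 2^t$ makes the remaining verification routine.
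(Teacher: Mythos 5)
Your proposal is correct, and its skeleton matches what the paper attributes to \cite{HHandAL}: the lower bound is exactly the same (Berge's bound $\gamma \geq \lceil n/(\Delta+1)\rceil$ combined with the observation that $p\mid n$ forces $\lfloor \log n\rfloor + 1 = p$), and the upper bound comes from an explicit dominating set of size $n/p$ whose verification hinges on $2$ being a primitive root modulo $p$. Where you genuinely diverge is in the choice of that set. The construction the paper works with (see the proof of its generalization, Theorem~\ref{thm:1}) is the union of the two residue classes $0$ and $-1$ modulo $2p$, i.e.\ $\{2pl\} \cup \{2pl-1\}$, which forces a case split on the parity of the vertex to be dominated and a choice between two candidate translates $l_1, l_2$ to land inside $\{0,\dots,n-1\}$. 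You instead take the single residue class $(p-1)/2$ modulo $p$, chosen as the fixed point of $c \mapsto -1-c$; this makes the verification essentially uniform (one congruence $2^t \equiv 1 + v + (p-1)/2 \pmod p$, solvable precisely when $v \notin D$), makes the efficiency of the dominating set visible at a glance since $N[d]$ meets every residue class mod $p$ exactly once, and in fact works verbatim under the weaker hypothesis $p \leq \lceil \log n\rceil$ of Theorem~\ref{thm:1}. What the paper's two-class construction buys in exchange is that it respects the even/odd bipartition of $KG_n$ explicitly and serves as the template that extends to the prime-power setting of Theorem~\ref{thm:2}. One small point worth making explicit in your write-up: the vertex $d = (2^t - 1 - v) \bmod n$ is never equal to $v$ itself, since $2v$ is even while $2^t - 1$ is odd and $n$ is even, so $\{v,d\}$ really is an edge.
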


Given the conditions in the hypothesis, the authors construct a dominating set of size $\frac{n}{p}$ which yields that $\gamma(KG_n) \leq \frac{n}{p}$.  By remarking that the maximum degree of $KG_n$, denoted by $\Delta(KG_n)$ is $ \lfloor \log n \rfloor$ and applying Theorem~\ref{thm:3} (see Section~\ref{sec: lowerbounds}) which states that $\gamma(G) \geq \bigg \lceil \frac{n}{\Delta(G) + 1} \bigg \rceil$ for any graph $G$ on $n$ vertices, they obtain that $\gamma(KG_n) \geq \frac{n}{\lfloor \log{n} \rfloor + 1} = \frac{n}{\lceil \log{n} \rceil} = \frac{n}{p}$ (implicitly here, we have that $\lfloor \log{n} \rfloor + 1 = \lceil \log{n} \rceil = p$ which follows because $n$ is not a power of two since an odd prime $p$ divides $n$).  Thereby establishing $\gamma(KG_n)$ exactly.  Thus, the conclusion is as strong as one can hope for, although the conditions in the hypothesis are rather restrictive.  In fact, the best known upper bound on $\gamma(KG_n)$ for arbitrary even $n$, also given in \cite{HHandAL} is stated in the following theorem.


\begin{theorem}
 
For arbitrary even $n$, $\gamma(KG_n) \leq \frac{n}{4}.$

\end{theorem}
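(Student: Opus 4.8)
The plan is to exhibit an explicit dominating set of size $\lceil n/4\rceil$, exploiting two structural features of $KG_n$. First, for each fixed $t$ the relation $x+y\equiv 2^t-1\pmod n$ defines a \emph{perfect matching} on $V$: since $2^t-1$ is odd, the involution $x\mapsto 2^t-1-x$ has no fixed point when $n$ is even. Thus $KG_n$ is the edge-disjoint union of $\lfloor\log n\rfloor$ perfect matchings. Second, because $2^t-1$ is odd, every edge joins an even index to an odd index, so $KG_n$ is bipartite with parts the even and the odd residues. In particular no odd vertex can be covered by an edge incident to another odd vertex, so the two parities must be dominated together, and the dominating set must contain vertices of both parities.

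For the clean case $8\mid n$ I would take
\[
D=\{\,x\in V: x\equiv 0 \text{ or } 5 \pmod 8\,\},
\]
which has $|D|=2\cdot\tfrac{n}{8}=\tfrac{n}{4}$. Using only the connections $t=1,2,3$ (available since $n\ge 8$), whose sums are $1,3,7$, I would verify domination by reducing modulo $8$; this is legitimate because $8\mid n$, so reduction modulo $n$ preserves residues modulo $8$. An even dominator $x\equiv 0$ has neighbours $1-x,3-x,7-x\equiv 1,3,7\pmod 8$, so the even dominators collectively reach every odd vertex in classes $1,3,7\pmod 8$; an odd dominator $x\equiv 5$ has neighbours $1-x,3-x,7-x\equiv 4,6,2\pmod 8$, reaching every even vertex in classes $2,4,6\pmod 8$. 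The only residues left are class $0$ (the even dominators themselves) and class $5$ (the odd dominators themselves), whence $N[D]=V$.

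The key step is the deliberate choice of the residues $0$ and $5$ modulo $8$. The three smallest connection sums $1,3,7$ realize exactly three of the four odd (respectively even) residue classes modulo $8$, so each parity of dominator inevitably misses one class of the opposite parity; the pair $\{0,5\}$ is arranged so that the class missed from one parity is precisely the class occupied by the dominators of that parity. Checking that this forces full coverage with no redundancy also matches the balanced count ($\tfrac{n}{8}$ even and $\tfrac{n}{8}$ odd dominators) that a tiling of $\mathbb{Z}/n\mathbb{Z}$ by closed neighbourhoods would demand, which is why $n/4$ is the natural size.

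The main obstacle is the remaining residues $n\equiv 2,4,6\pmod 8$, together with the single small case $n=6$ (there $\lfloor\log n\rfloor=2$, so $t=3$ is unavailable, $KG_6=C_6$, and $\gamma=2=\lceil 6/4\rceil$ is checked by hand). When $8\nmid n$, reduction modulo $n$ no longer preserves residues modulo $8$, so the clean verification above breaks near the wrap-around. I expect to handle these cases by the same even/odd balancing principle carried out modulo $4$, or by retaining the mod-$8$ set and adjoining $O(1)$ boundary vertices to patch the wrap, absorbing the rounding into the ceiling $\lceil n/4\rceil$; confirming that the modified sets still meet every residue class is the delicate part of the argument.
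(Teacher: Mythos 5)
The paper does not actually prove this theorem --- it is quoted verbatim from \cite{HHandAL} --- so there is no in-paper argument to compare against, and I can only assess your construction on its own terms. Your core case is sound: for $8\mid n$ the set $D=\{x : x\equiv 0\ \text{or}\ 5\pmod 8\}$ has size $n/4$ and dominates, and your verification modulo $8$ using the sums $1,3,7$ is correct (all eight residue classes check out). The supporting observations --- that each fixed $t$ gives a perfect matching and that $KG_n$ is bipartite by parity --- are also correct, though they are not actually needed for the verification itself.

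The genuine gap is everything with $8\nmid n$, which is three quarters of the even integers, and it is not the routine boundary patch you describe. For $n=12$ the natural truncation $\{0,5,8\}$ of your set leaves $4$, $6$ and $9$ undominated, so the wrap-around destroys the mod-$8$ argument globally rather than perturbing it at one seam. Moreover, your fallback of adjoining $O(1)$ extra vertices only yields $\gamma(KG_n)\le n/4+O(1)$, which is weaker than the stated bound, and \emph{absorbing the rounding into} $\lceil n/4\rceil$ quietly changes the theorem: as literally stated the bound $n/4$ is in fact false for some small $n$ (already $\gamma(KG_6)=2>6/4$; $\gamma(KG_{10})\ge\lceil 10/4\rceil=3>10/4$ by Theorem~\ref{thm:3}; and $\gamma(KG_{12})\ge 4>12/4$ by the parity argument of Section~\ref{sec: lowerbounds}). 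So to complete the proof you must first settle the correct form of the statement (presumably $\lceil n/4\rceil$, possibly with an additive constant, as in \cite{HHandAL}) and then produce explicit dominating sets for $n\equiv 2,4,6\pmod 8$; neither step is carried out, and the $n\equiv 4\pmod 8$ case in particular needs a different set, not a patched one.
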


We will generalize the results of Theorem~\ref{thm: KnodelOriginal}.  In particular, we will relax some of the conditions on the value of $n$ and obtain positive results for $\gamma(KG_n)$.  Our first main result of the section is given below.  It establishes an upper bound of $\gamma(KG_n)$ whenever $n$ has an odd prime factor $p \leq \lceil \log{n} \rceil$ such that $2$ is a primitive root modulo $p$. Examples of such primes are $3,5,11,13, 19$. See Sequence $A001122$ in \cite{OEIS}.
Notice that we have relaxed the condition in Theorem~\ref{thm: KnodelOriginal} that the prime $p$ be equal to $\lceil \log{n} \rceil$.  Although this result does not apply to all even values of $n$, it applies to a rather general class of even integers.

\begin{theorem}\label{thm:1}
Let $n$ be even and suppose that $n$ has an odd prime factor $p \leq \lceil \log{n} \rceil$ such that $2$ is a primitive root modulo $p$. It then follows that 

$$ \gamma(KG_n) \leq \frac{n}{p}. $$

\end{theorem}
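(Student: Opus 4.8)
The plan is to exhibit an explicit dominating set $D \subseteq V$ of size $n/p$, from which $\gamma(KG_n) \le n/p$ follows immediately. Writing $u + v \equiv 2^t - 1 \pmod n$ for the adjacency relation, a vertex $u$ dominates itself together with all vertices of the form $2^t - 1 - u \pmod n$ for $t = 1, \ldots, \lfloor \log n\rfloor$. I will only need the values $t = 1, \ldots, p-1$; these are admissible because $p \mid n$ with $p$ odd forces $n$ not to be a power of two, so $\log n \notin \mathbb{Z}$ and hence $\lfloor \log n\rfloor = \lceil \log n\rceil - 1 \ge p - 1$. For each $u$ set
\[ B(u) = \{u\} \cup \{\, 2^t - 1 - u \bmod n : t = 1, \ldots, p-1 \,\}, \]
so that $B(u)$ is contained in the closed neighbourhood of $u$. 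The candidate dominating set is the single residue class $D = \{\, v \in V : v \equiv (p-1)/2 \pmod p \,\}$, which has exactly $n/p$ elements; it then remains to prove that the sets $B(u)$ with $u \in D$ cover all of $V$.

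The key observation, and the place where the primitive root hypothesis enters, is a count modulo $p$. Since $2$ is a primitive root modulo $p$, the powers $2^1, \ldots, 2^{p-1}$ run through all nonzero residues modulo $p$, so the neighbours $2^t - 1 - u$ run through every residue class modulo $p$ except the class of $-1 - u$. The choice $u \equiv (p-1)/2$ is engineered precisely so that $-1 - u \equiv (p-1)/2 \equiv u \pmod p$, this being the unique residue fixed by the reflection $r \mapsto -1 - r$. Consequently the $p-1$ neighbours occupy the $p-1$ classes other than the class of $u$, while $u$ itself occupies the remaining class; I would record this as the statement that $B(u)$ contains exactly one vertex in each residue class modulo $p$, and in particular $|B(u)| = p$.

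Finally I would establish disjointness of $B(u)$ and $B(u')$ for distinct $u, u' \in D$. Because $p \mid n$, two residues that coincide modulo $n$ coincide modulo $p$, so it suffices to compare, within each fixed class $r$ modulo $p$, the unique representative contributed by $B(u)$ with that contributed by $B(u')$. For $r \neq (p-1)/2$ the exponent $t_r$ solving $2^{t_r} \equiv r + 1 + u \pmod p$ depends only on $r$ (since $u \equiv u' \pmod p$), so the two representatives are $2^{t_r} - 1 - u$ and $2^{t_r} - 1 - u'$, which differ by $u' - u \not\equiv 0 \pmod n$; for $r = (p-1)/2$ the representatives are $u$ and $u'$ themselves. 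Hence the $n/p$ sets $B(u)$ are pairwise disjoint, each of size $p$, and therefore partition $V$, so every vertex is dominated by some element of $D$. The only delicate point is the selection of the residue class $(p-1)/2$, since any other class would leave an entire residue class modulo $p$ uncovered, together with the verification that $2$ being a primitive root makes the neighbour residues exhaust all classes but one; the remaining steps are routine bookkeeping.
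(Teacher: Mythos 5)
Your proof is correct, but it takes a genuinely different route from the paper's. The paper dominates with the union of two residue classes modulo $2p$, namely $D=\{x: x\equiv 0 \pmod{2p}\}\cup\{x: x\equiv -1\pmod{2p}\}$, and verifies domination by a direct case analysis on the parity of a vertex $x\notin D$, explicitly producing a neighbour $s\in D$ by solving $m_0\equiv 2^i$ or $m_0+1\equiv 2^i\pmod p$ and adjusting the multiple of $2p$. You instead take the single residue class $u\equiv (p-1)/2\pmod p$ and prove the stronger structural statement that the sets $B(u)$, built from the edge types $t=1,\dots,p-1$ only, are pairwise disjoint of size $p$ and hence partition $V$; the primitive-root hypothesis enters in the same way in both arguments (the powers $2^1,\dots,2^{p-1}$ exhaust the nonzero residues mod $p$), but your choice of the unique fixed point of $r\mapsto -1-r$ replaces the paper's parity case split entirely. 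Your verification of the admissibility of the exponents ($\lfloor\log n\rfloor=\lceil\log n\rceil-1\ge p-1$ since $n$ is not a power of two) is the same observation the paper needs implicitly. What your approach buys is a cleaner counting argument and extra structure for free: since $u+u'\equiv -1\not\equiv 2^t-1\pmod p$ for all $t$, your $D$ is in fact an independent dominating set, and each vertex outside $D$ is covered exactly once by the restricted neighbourhoods. What the paper's approach buys is a template that extends more readily to the prime-power case of Theorem~\ref{thm:2}, where the analogous two-class construction modulo $p^k$ is reused.
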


\begin{proof}
Notice that $\frac{n}{2p}$ is indeed an integer because $p$, an odd prime, is assumed to be a factor of $n$, where $n$ is also an even number. Thus, we consider the following set $D = \{ 2pl \mid 0 \leq l \leq \frac{n}{2p} - 1 \} \cup \{ 2pl - 1 \mid 1 \leq l \leq \frac{n}{2p} \}$.  We will argue that $D$ is a dominating set in $KG_n$ and given that the size of $D$ is $\frac{n}{2p} + \frac{n}{2p} = \frac{n}{p}$, the result will then follow.

To show that $D$ is a dominating set we will show that any vertex $x \notin D$ is adjacent to a vertex in $D$ with a constructive argument.  That is, we will give a closed form expression for the neighbour in $D$ which depends on the value of $x$.

Let $x \in V \setminus D$ be an arbitrary vertex.  Since $x \notin D$, it follows that $x$ takes the form $x = 2pl_0 + m_0$, where $0 \leq l_0 \leq  \frac{n}{2p} - 1$ and $1 \leq m_0 \leq 2p - 2$.  We break the proof into two cases, based on the parity of $x$.  

First consider the case where $x$ is odd.  Since $x$ is odd we must have that $m_0$ is odd which implies that $m_0 \neq p - 1$.  That is, $1 \leq m_0 \leq 2p - 2$ and $m_0 \neq p - 1$.  Therefore, we have that $\mathrm{gcd}(m_0 + 1, p) = 1$.  Using the fact that $2$ is a primitive root modulo $p$ we obtain that $ m_0 + 1 \equiv 2^i \: (\bmod \: p)$ for some $1 \leq i \leq p - 1$. 

That is, $2^i - 1 = m_0 + jp$ where $j$ is even because $2^i - 1$, $m_0$ and $p$ are all odd. Thus, consider $l_1 = \frac{j}{2} - l_0$ and $l_2 = \frac{n}{2p} + \frac{j}{2} - l_0$.  Notice that both $l_1$ and $l_2$ are integers because both $\frac{n}{2p}$ and $\frac{j}{2}$ are integers by previous remarks made. Also, notice that at least one of $l_1$ and $l_2$ is between $0$ and $\frac{n}{2p} - 1$, as $i < p \leq m$.  

If  $0 \leq l_1 \leq \frac{n}{2p} - 1$ then we take $s = 2pl_1 \in D$.  We have that $x + s = (2pl_0 + m_0) + 2pl_1 = (2pl_0 + m_0) + 2p(\frac{j}{2} - l_0) = (m_0 + jp) = 2^i - 1$.  That is, $x + s = 2^i - 1$ and $s$ is therefore adjacent to $x$.  

Similarly, if $0 \leq l_2 \leq \frac{n}{2p} - 1$ then we take $s = 2pl_2 \in D$ and obtain that $x + s =  (2pl_0 + m_0) + 2pl_2 = (2pl_0 + m_0) + 2p(\frac{n}{2p} + \frac{j}{2} - l_0) = (m_0 + jp) + n = 2^i - 1 + n$.  That is, $x + s \equiv 2^i-1 \: (\bmod \: n)$ and $s$ is therefore adjacent to $x$.

Thus, in the case that $x$ is odd we have shown that there is a vertex in $D$ that is adjacent to $x$.

Now, consider the case where $x$ is even.  We have that $m_0$ must be even and therefore $\mathrm{gcd}(m_0, p) = 1$. Similarly we obtain that $ m_0 \equiv 2^i \: (\bmod \: p)$ for some $1 \leq i \leq p - 1$.

That is, $2^i = m_0 + jp$ where $j$ must be even.  Following the argument given above we consider $l_1 = \frac{j}{2} - l_0$ and $l_2 = \frac{n}{2p} + \frac{j}{2} - l_0$ and select $s = 2pl_1 - 1$ or $s = 2pl_2 - 1$ accordingly.  One of these must be a vertex in $D$ adjacent to $x$.
\end{proof}

We informally state an immediate consequence of this theorem.  Consider any even $n$ which has an odd prime factor that satisfies the aforementioned conditions.  One can select the largest such prime factor of $n$ to achieve the strongest result.  When a prime greater than $3$ with the desired properties is found we have established a better upper bound than $\frac{n}{4}$ for a rather general class of even integers.  The formal statement is given explicitly in the following corollary, which generalizes Theorem~\ref{thm: KnodelOriginal} of \cite{HHandAL}.

\begin{corollary}
Let $n$ be even and suppose that $n$ has a prime factor $p$ with $3 < p \leq \lceil \log n \rceil$ such that $2$ is a primitive root modulo $p$. It then follows that 

$$ \gamma(KG_n) \leq \frac{n}{p} < \frac{n}{4}. $$
\end{corollary}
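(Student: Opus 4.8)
The plan is to recognize this statement as an essentially immediate consequence of Theorem~\ref{thm:1}. First I would observe that the hypotheses of the corollary are a special case of those of Theorem~\ref{thm:1}: we are given that $n$ is even and possesses a prime factor $p$ with $3 < p \leq \lceil \log n \rceil$ for which $2$ is a primitive root modulo $p$. Since any prime exceeding $3$ is necessarily odd, $p$ is an odd prime factor of $n$ satisfying $p \leq \lceil \log n \rceil$ with $2$ a primitive root modulo $p$, which is precisely the hypothesis required to invoke Theorem~\ref{thm:1}. Applying that theorem directly yields the first inequality $\gamma(KG_n) \leq \frac{n}{p}$.

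For the second inequality, I would note that since $p$ is a prime strictly greater than $3$, it must be that $p \geq 5$. Consequently $\frac{n}{p} \leq \frac{n}{5} < \frac{n}{4}$, which establishes the strict bound $\frac{n}{p} < \frac{n}{4}$ and completes the chain of inequalities.

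There is no genuine obstacle here: the entire technical content of the corollary is carried by Theorem~\ref{thm:1}, and the only additional input is the elementary fact that the smallest prime exceeding $3$ is $5$. The role of the corollary is interpretive rather than computational --- it makes explicit that whenever such a prime $p > 3$ is available, the dominating set exhibited in the proof of Theorem~\ref{thm:1} strictly improves upon the universal bound $\frac{n}{4}$ recorded in the preceding theorem, thereby situating the new result relative to the prior state of the art.
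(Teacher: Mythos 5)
Your proposal is correct and matches the paper's intent exactly: the corollary is stated without a separate proof, being an immediate consequence of Theorem~\ref{thm:1} together with the observation that a prime $p > 3$ satisfies $p \geq 5$, so $\frac{n}{p} \leq \frac{n}{5} < \frac{n}{4}$. Nothing further is needed.
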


We now turn to the second main result in this section. We present an upper bound on a slightly more restricted class of even integers.  It should be noted that, in certain cases, this upper bound is a much better bound. 

\begin{theorem}\label{thm:2}
Let $n$ be even, $p$ be an odd prime and $k \geq 2$ be an integer. Suppose that $\phi(p^k) < \lceil \log{n} \rceil$, that $p^k$ divides $n$, and that $2$ is a primitive root modulo $p^k$. It then follows that 

$$ \gamma(KG_n) \leq \frac{2n}{p^k}. $$
\end{theorem}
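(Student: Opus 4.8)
The plan is to exhibit an explicit dominating set, following the strategy of the proof of Theorem~\ref{thm:1} but now working modulo $p^k$ and compensating for the fact that $p^k$ is not prime. First note that since $p$ is odd and $p^k\mid n$ with $n$ even, we have $2p^k\mid n$, so $\frac{n}{2p^k}$ is an integer. I would take $D$ to be the union of the four arithmetic progressions of common difference $2p^k$ and offsets $0,-1,1,2$, namely
$$D=\{2p^k l\}\cup\{2p^k l-1\}\cup\{2p^k l+1\}\cup\{2p^k l+2\},$$
where in each progression $l$ ranges over $\frac{n}{2p^k}$ consecutive values. For $p\geq 3$ and $k\geq 2$ the four offsets are distinct modulo $2p^k$, so the progressions are pairwise disjoint and each has size $\frac{n}{2p^k}$, whence $|D|=\frac{2n}{p^k}$, matching the claimed bound.

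Before the cases I would record the one congruence fact that drives everything: if $V$ is even and $\gcd(V,p)=1$, then there exists $t$ with $2^t\equiv V \pmod{2p^k}$ and $1\le t\le \phi(p^k)$. Indeed, since $2$ is a primitive root modulo $p^k$ its powers run through all units modulo $p^k$, so some $t\le\phi(p^k)$ gives $2^t\equiv V\pmod{p^k}$; as $2^t$ and $V$ are both even, the Chinese Remainder Theorem upgrades this to a congruence modulo $2p^k$. Moreover the hypothesis $\phi(p^k)<\lceil\log n\rceil$ together with the fact that $n$ is not a power of two (it is divisible by the odd prime $p$) gives $\phi(p^k)\le\lfloor\log n\rfloor$, so every such $t$ is a legitimate edge-type in Definition~\ref{def : Knodel Graph}.

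For an arbitrary $x\notin D$ I would write $x=2p^k l_0+m_0$ with $1\le m_0\le 2p^k-2$ and split into four cases according to the parity of $x$ and whether the relevant residue is coprime to $p$. If $x$ is odd with $\gcd(m_0+1,p)=1$, or $x$ is even with $\gcd(m_0,p)=1$, these are the \emph{good} cases that behave exactly as in Theorem~\ref{thm:1}: one finds a neighbour in the offset-$0$ progression (resp. the offset-$(-1)$ progression) by applying the congruence fact above with $V=m_0+1$ (resp. $V=m_0$), and then solves the resulting linear congruence for $l$ in the range $0\le l\le\frac{n}{2p^k}-1$. The remaining two cases are the new phenomenon: $x$ even with $p\mid m_0$, and $x$ odd with $p\mid(m_0+1)$. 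These are precisely the vertices congruent to $0$ or $-1$ modulo $2p$, and they cannot be reached from the offset-$0$ or offset-$(-1)$ progressions because a power of two is never divisible by $p$. Here I would instead use the offset-$1$ progression for bad even $x$ (target congruence $2^t\equiv m_0+2$) and the offset-$2$ progression for bad odd $x$ (target $2^t\equiv m_0+3$); in both situations the constant shift turns the relevant residue into something $\equiv 2\pmod p$, hence a unit, so the congruence fact applies and a neighbour in $D$ is produced, with the correct parity built into the choice of offset.

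The main obstacle is exactly these last two cases, which have no analogue in Theorem~\ref{thm:1} and are the reason the bound degrades from $\frac{n}{p^k}$ to $\frac{2n}{p^k}$. The conceptual step is to recognise that the residues divisible by $p$ form the classes $0$ and $-1$ modulo $2p$, and that a single constant shift of the progression moves them into the group of units modulo $p^k$, so that the primitivity of $2$ can be invoked a second time; the rest is bookkeeping of parities (handled by choosing offsets of the correct parity) and of the range of $l$ (handled by reducing the linear congruence modulo $\frac{n}{2p^k}$), together with the verification that the exponent $t$ never exceeds $\lfloor\log n\rfloor$, which is where the hypothesis $\phi(p^k)<\lceil\log n\rceil$ is used.
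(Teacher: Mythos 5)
Your argument is correct and rests on the same engine as the paper's proof --- an explicit dominating set built from arithmetic progressions, the primitive-root hypothesis to solve $2^t\equiv V\pmod{p^k}$, a parity observation to lift this to a congruence modulo $2p^k$, and $\phi(p^k)\le\lfloor\log n\rfloor$ (from $\phi(p^k)<\lceil\log n\rceil$ and $n$ not being a power of two) to keep $t$ a legal edge type --- but your dominating set and your handling of the problematic vertices are genuinely different. The paper takes $D=\{lp^k \mid 0\le l\le \frac{n}{p^k}-1\}\cup\{lp^k-1 \mid 1\le l\le \frac{n}{p^k}\}$, i.e., the residue classes $0$ and $-1$ modulo $p^k$ (equivalently $0$, $p^k$, $-1$, $p^k-1$ modulo $2p^k$); when the natural target ($m_0$ or $m_0+1$) is divisible by $p$, it switches to the other progression, exploiting the fact that two consecutive integers cannot both be multiples of $p$, and the parity bookkeeping is then carried by the parity of $l_0$ --- which is why its case split is on the parities of $l_0$ and $m_0$. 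You instead take the classes $0,-1,1,2$ modulo $2p^k$ and reroute the bad vertices (exactly those $\equiv 0$ or $-1 \pmod{2p}$) by shifting the target by $2$, which preserves parity and lands on a residue $\equiv 2\pmod p$. Both sets have size $\frac{2n}{p^k}$ and both arguments go through; yours isolates the bad residue classes more transparently and avoids tracking the parity of $l_0$, at the cost of carrying four progressions, while the paper's version reuses the same two progressions for good and bad vertices alike.
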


\begin{proof}

Let $D = \{ lp^k \mid 0 \leq l \leq \frac{n}{p^k} - 1\} \cup \{ lp^k - 1 \mid 1 \leq l \leq \frac{n}{p^k} \}$.  We will show that $D$ is a dominating set of $KG_n$ by following a similar proof as above which now considers a couple more cases.  

Let $x \in V \setminus D$ be an arbitrary vertex and note that $x$ takes the form $x = l_0p^k + m_0$, where $0 \leq l_0 \leq  \frac{n}{p^k} - 1$ and $1 \leq m_0 \leq p^k - 2$.  We consider the case where $x$ is odd and the details for the case where $x$ is even follow identically.  

\theoremstyle{plain}

Suppose that $x = p^kl_0 + m_0$ is odd. We therefore have that either $l_0$ is even and $m_0$ is odd or $l_0$ is odd and $m_0$ is even.

\begin{case} $l_0$ is even and $m_0$ is odd.

If $l_0$ is even and $m_0$ is odd then we have that $m_0 + 1$ is even and  $\mathrm{gcd}(m_0 + 1, p^k) = 1$ or $\mathrm{gcd}(m_0 + 1, p^k) = p^a$ for some $a > 0$. 

If $\mathrm{gcd}(m_0 + 1, p^k) = 1$ we therefore obtain that $ m_0 + 1 \equiv 2^i \: (\bmod \: p^k)$ for some $1 \leq i \leq p^k - p^{k-1}$.  That is to say, $m_0 = 2^i - 1 + jp^k$.  

Thus, consider $l_1 = -(j + l_0)$ and $l_2 = \frac{n}{p^k} - (j + l_0)$.  Depending on the size of $l_1$ or $l_2$ we select $s = l_1p^k$ or $s = l_2p^k$ as a vertex in $D$. When $s=l_1p^k$  we have
\[
x+s= 2^i-1
\] and obtain that $s$ is adjacent to $x$ (by definition of Kn\"odel graph, as $i < \phi(p^k) < \lceil \log{n} \rceil $); similarly when $s = l_2p^k$.

If $\mathrm{gcd}(m_0 + 1, p^k) = p^a$ with $a>0$ then 
$\mathrm{gcd}(m_0, p^k) = 1$  and $m_0 = 2^i +jp^k$. Similarly, we consider $l_1 = -(j + l_0)$ or $l_2 = \frac{n}{p^k} - (j + l_0)$ and pick $s = l_1p^{k} - 1$ or $s = l_2p^{k} - 1$ accordingly and we are done. 

\end{case}

\begin{case}
In the case that $l_0$ is odd and $m_0$ is even we have that $\mathrm{gcd}(m_0, p^k) = 1$ or $\mathrm{gcd}(m_0, p^k) = p^a$ for some $a > 0$.  

If $\mathrm{gcd}(m_0, p^k) = 1$ then 
$m_0 = 2^i +jp^k$. Consider $l_1 = -(j + l_0)$ or $l_2 = \frac{n}{p^k} - (j + l_0)$ and pick $s = l_1p^{k} - 1$ or $s = l_2p^{k} - 1$.

If $\mathrm{gcd}(m_0, p^k) = p^a$ with $a > 0$ then $\mathrm{gcd}(m_0 + 1, p^k) = 1$  and $m_0 = 2^i - 1 +jp^k$. Consider $l_1 = -(j + l_0)$ or $l_2 = \frac{n}{p^k} - (j + l_0)$ and pick $s = l_1p^{k}$ or $s = l_2p^{k}$. 

\end{case}

\end{proof}
Informally, we can discuss the power of the two previous theorems. Take $n$ to be even and consider $n = 2^{k_1}p_2^{k_2}...p_j^{k_j}$.  Find the largest prime $p_l$ that meets the conditions in Theorem~\ref{thm:1} and the largest value $p_h^{e_h}$ with $e_h \leq k_h$ that meets the conditions in Theorem~\ref{thm:2}.  These two primes, $p_l$ and $p_h$, may coincide.  We then have that $\gamma(KG_n) \leq \mathrm{min} \left\{ \frac{2n}{p_h^{e_h}}, \frac{n}{p_l} \right\}$.

It is worth comparing Theorems~\ref{thm:1} and \ref{thm:2} with all the previously known results.  First we consider Theorem~\ref{thm: KnodelOriginal}, established in \cite{HHandAL}.  For the values of $n$ for which their results apply, the authors achieved the strongest possible bound on $\gamma(KG_n)$.  Yet, one should mention that they have not necessarily provided results for an infinite family of values.  Indeed, a condition on $n$ is that $\lceil \log{n} \rceil$ is a prime with $2$ as a primitive root although it is not known whether there are infinitely many such primes (i.e. this is a special case of Artin's Conjecture \cite{ArtinConjecture}).  Thus whether their results apply to infinitely many even values of $n$ is conditional on the conjecture.  Although not necessarily the strongest bounds in some cases, the results that we have presented unconditionally apply to an infinite family of even values.  

Within \cite{xueliang2009domination}, the authors study the Kn{\"o}del graph with fixed degree $3$, e.g., by augmenting Definition~\ref{def : Knodel Graph} to allow $t$ to only take on values $1, 2$ or $3$.  For fixed degree $3$, the exact domination number is found to be $2\lfloor \frac{n}{8} \rfloor + c$ for $0 \leq c \leq 2$, where $c$ depends on $n \bmod 8$.  Similarly, the work of \cite{mojdeh2018domination} examines the Kn{\"o}del graph with fixed degree $4$.  The exact value for the domination number is proved to be $2\lfloor \frac{n}{10} \rfloor + c$ for $0 \leq c \leq 4$, where $c$ depends on $n\bmod{10}$.  Although expressed slightly differently, these two results are roughly equal to the trivial lower bound (See Section~\ref{sec: lowerbounds}).


In a sense, all the known results apply to a special case of the Kn{\"o}del graph.  The results in \cite{mojdeh2018domination, xueliang2009domination} apply to the special case of small constant degree (but any even value of $n$) whereas our results, along with those found in \cite{HHandAL}, apply only to particular values of $n$ (but the degree is $\lfloor \log n \rfloor$). 

\section{Necessary Conditions for $\gamma(KG_n) = \bigg \lceil \frac{n}{\lfloor \log n \rfloor + 1} \bigg \rceil$}\label{sec: lowerbounds}

As a preliminary, we state the following result originally proved by Berge \cite{Berge}.

\begin{theorem}\label{thm:3}
Let $G$ be a graph on $n$ vertices and let $\Delta$ denote the maximum degree of any vertex in $G$.  It then follows that 
$\gamma(G) \geq \bigg \lceil \frac{n}{\Delta + 1} \bigg \rceil$.
\end{theorem}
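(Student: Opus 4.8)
The plan is to prove this via an elementary covering (double-counting) argument. Let $D$ be any dominating set of $G$. The first step is to record the defining property of domination in set-theoretic form: writing the \emph{closed neighbourhood} of a vertex $v$ as $N[v] = \{v\} \cup \{u : \{u,v\} \in E\}$, the fact that every vertex is either in $D$ or adjacent to a vertex of $D$ says precisely that the closed neighbourhoods of the members of $D$ cover all of $V$, i.e.
\[
V = \bigcup_{v \in D} N[v].
\]

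Next I would bound the size of each closed neighbourhood. Since every vertex has at most $\Delta$ neighbours, we have $|N[v]| = \deg(v) + 1 \leq \Delta + 1$ for every $v \in D$. Combining this with the covering relation above and the subadditivity of cardinality over a union yields
\[
n = |V| = \left| \bigcup_{v \in D} N[v] \right| \leq \sum_{v \in D} |N[v]| \leq |D|(\Delta + 1).
\]

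Rearranging gives $|D| \geq \frac{n}{\Delta + 1}$. Because $|D|$ is an integer, I can immediately sharpen this to $|D| \geq \big\lceil \frac{n}{\Delta + 1} \big\rceil$. Finally, specializing $D$ to a minimum dominating set, so that $|D| = \gamma(G)$ (equivalently, taking the infimum of the inequality over all dominating sets), produces the claimed lower bound $\gamma(G) \geq \big\lceil \frac{n}{\Delta + 1} \big\rceil$.

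As for difficulty, there is essentially no obstacle here: the whole content is the pigeonhole-style observation that a single vertex dominates at most $\Delta + 1$ vertices, counting itself. The only points meriting (minor) care are verifying that the union of closed neighbourhoods genuinely equals $V$ rather than merely being contained in it — which is exactly the defining property of a dominating set — and invoking integrality to pass from the real-valued bound to its ceiling.
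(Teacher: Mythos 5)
Your argument is correct and complete: the closed-neighbourhood covering bound $n \leq |D|(\Delta+1)$ followed by integrality is exactly the standard proof of this classical result, which the paper itself states without proof, citing Berge. There is nothing to add; the only stylistic remark is that the subadditivity step could equally be phrased as a direct pigeonhole count, but the content is identical.
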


Roughly put, meeting this lower bound is the best one can hope for.  From the description of $KG_n$ we see that every vertex has degree $\lfloor \log n \rfloor$ and as was previously shown in \cite{HHandAL}, there are some sufficient conditions which allow $KG_n$ to meet this lower bound of $\bigg \lceil \frac{n}{\lfloor \log n \rfloor + 1} \bigg \rceil$.  Although this section could have been appropriately titled lower bounds on $\gamma(KG_n)$ we will see that these bounds are not much better than the preceding general bound.  In this section we instead investigate necessary conditions on the value of $n$ if $\gamma(KG_n) = \bigg \lceil \frac{n}{\lfloor \log n \rfloor + 1} \bigg \rceil$ which can very well be interpreted as lower bounds on the domination number.   

To begin we present a few definitions and well known results which will be of use.  A graph $G = (V, E)$ is said to be \emph{$k$-regular} for some integer $k \geq 0$ if every vertex has degree $k$.  A dominating set $D \subseteq V$ is called \emph{perfect} if every vertex in $V \setminus D$ has exactly one neighbour in $D$.  A dominating set $D \subseteq V$  is called \emph{efficient} if it is a perfect dominating set that is also independent, e.g., for any pair $u, v \in D$ we have $\{ u, v \} \notin E$.  We say that a set is a $\gamma$-set if it is a dominating set of minimal cardinality. The following result is attributed to Haynes, Hedetniemi, and Slater \cite{Fundamentals}.

\begin{theorem}\label{thm: Efficient-DS}
Let $G$ be a graph on $n$ vertices and suppose that $\frac{n}{\Delta + 1}$ is an integer.  If $\gamma(G) = \frac{n}{\Delta + 1}$ then every $\gamma$-set is an efficient dominating set.
\end{theorem}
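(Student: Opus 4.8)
The plan is to run a double-counting argument on closed neighbourhoods and then exploit the fact that the hypothesis $\gamma(G)=\frac{n}{\Delta+1}$ forces the Berge bound of Theorem~\ref{thm:3} to be met with equality, which should collapse a chain of inequalities into equalities.

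First I would fix an arbitrary $\gamma$-set $D$, so that $|D|=\frac{n}{\Delta+1}$. For each $v\in D$ let $N[v]=\{v\}\cup N(v)$ denote its closed neighbourhood, and note $|N[v]|=\deg(v)+1\le \Delta+1$. Because $D$ is a dominating set, every vertex of $G$ lies in some $N[v]$, i.e.\ $\bigcup_{v\in D}N[v]=V$. Counting with multiplicity then gives
\[
n=\left|\bigcup_{v\in D}N[v]\right|\le \sum_{v\in D}|N[v]|=\sum_{v\in D}(\deg(v)+1)\le |D|(\Delta+1)=\frac{n}{\Delta+1}(\Delta+1)=n.
\]
Since the two ends agree, every inequality in this chain must in fact be an equality.

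I would then read the conclusion off these equalities. The equality $\left|\bigcup_{v\in D}N[v]\right|=\sum_{v\in D}|N[v]|$ says that the closed neighbourhoods $\{N[v]\}_{v\in D}$ are pairwise disjoint, so they partition $V$. Independence of $D$ is then immediate: if distinct $u,v\in D$ were adjacent, then $u\in N[u]\cap N[v]$, contradicting disjointness. The perfect-domination property follows from the same fact: any $w\in V\setminus D$ lies in exactly one of the sets $N[v]$ by disjointness, and since $w\neq v$ this forces $w$ to be adjacent to exactly one vertex of $D$. (The remaining equality $\sum_{v\in D}(\deg(v)+1)=|D|(\Delta+1)$ shows in addition that every $v\in D$ is a vertex of maximum degree $\Delta$, though this is not needed for the statement.)

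I do not anticipate a genuine obstacle here; the only point requiring care is the set-up of the inequality chain---namely, justifying that domination yields the covering $\bigcup_{v\in D}N[v]=V$ and that equality of the union's cardinality with the sum of cardinalities is precisely pairwise disjointness---after which both the independence of $D$ and the perfect-domination property fall out of the same disjointness fact simultaneously.
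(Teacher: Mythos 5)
Your proof is correct. Note that the paper does not actually prove this statement: it is quoted from the literature (attributed to Haynes, Hedetniemi, and Slater) and used as a black box, so there is no in-paper argument to compare against. Your double-counting of closed neighbourhoods is the standard proof of this fact and it is sound: the chain $n=\left|\bigcup_{v\in D}N[v]\right|\le\sum_{v\in D}(\deg(v)+1)\le|D|(\Delta+1)=n$ does collapse to equalities, the first equality is exactly pairwise disjointness of the closed neighbourhoods, and both independence (no $u\in N[u]\cap N[v]$ for adjacent $u,v\in D$) and the perfect-domination property (a vertex adjacent to two members of $D$ would lie in two closed neighbourhoods) follow from that disjointness. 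Your side remark that every vertex of $D$ must have degree $\Delta$ is also a correct consequence of the second equality.
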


This result has some interesting implications which are illustrated in the following proposition.  We believe that this proposition may already be known yet not stated in the literature explicitly.
\begin{proposition}
Let $G = (V, E)$ be a $k$-regular bipartite graph on $n$ vertices where $k \geq 2$ and suppose that $\frac{n}{k + 1}$ is an integer.  If $\gamma(G) = \frac{n}{k + 1}$ then $\frac{n}{k + 1}$ is an even integer.  Moreover, every $\gamma$-set $D$ of $G$ can be partitioned into two equal sized sets.
\end{proposition}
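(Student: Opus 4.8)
The plan is to invoke Theorem~\ref{thm: Efficient-DS} and then exploit the bipartite structure through a short counting argument. Since $G$ is $k$-regular we have $\Delta = k$, so the hypothesis $\gamma(G) = \frac{n}{k+1} = \frac{n}{\Delta+1}$ together with Theorem~\ref{thm: Efficient-DS} tells us that every $\gamma$-set $D$ is an efficient dominating set: it is independent, and every vertex outside $D$ has exactly one neighbour in $D$. The immediate consequence I would record is that the closed neighbourhoods $\{N[v] : v \in D\}$ partition $V$; since each $v \in D$ has degree $k$ and $D$ is independent, each such closed neighbourhood has exactly $k+1$ elements (the vertex together with its $k$ neighbours), and these sets are pairwise disjoint.

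Next I would bring in the bipartition $V = A \sqcup B$. Writing $D_A = D \cap A$ and $D_B = D \cap B$, the goal is to show $|D_A| = |D_B|$, which immediately gives that $|D| = 2|D_A|$ is even and exhibits the desired equal-sized partition of $D$. The key observation is that for $v \in D_A$ all $k$ of its neighbours lie in $B$ while $v$ itself lies in $A$, and symmetrically for $v \in D_B$. Using the partition of $V$ into closed neighbourhoods, each vertex of $A$ is either an element of $D_A$ or the (unique) dominated neighbour of some vertex of $D_B$, and likewise for $B$; counting gives
\[
|A| = |D_A| + k|D_B|, \qquad |B| = k|D_A| + |D_B|.
\]

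To finish, I would use the standard fact that a $k$-regular bipartite graph satisfies $|A| = |B|$, which follows by double-counting the edges ($k|A| = k|B|$, hence $|A|=|B|$ since $k \geq 1$). Subtracting the two displayed equations then yields $(1 - k)\bigl(|D_A| - |D_B|\bigr) = 0$, and since $k \geq 2$ forces $1 - k \neq 0$, we conclude $|D_A| = |D_B|$. The point that genuinely requires care is precisely this use of the hypothesis $k \geq 2$: for $k = 1$ the graph is a perfect matching, the counting collapses to a tautology, and the conclusion can fail, so the regularity degree cannot be dropped. Everything else is routine bookkeeping with the efficient-domination partition.
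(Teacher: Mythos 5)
Your proof is correct and follows essentially the same route as the paper: both invoke Theorem~\ref{thm: Efficient-DS} to get efficiency of every $\gamma$-set, establish $|A|=|B|$ by double-counting edges in the $k$-regular bipartite graph, derive the same two counting identities $|A| = |D_A| + k|D_B|$ and $|B| = k|D_A| + |D_B|$, and conclude $|D_A|=|D_B|$ using $k \geq 2$. No substantive differences.
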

\begin{proof}
Let $X$ and $Y$ be a bipartition of $V$, that is, $X$ and $Y$ partition $V$ and for all $\{ u, v \} \in E$ we have $u \in X$ and $v \in Y$.  Let $|X| = x$ and $|Y| = y$ and notice that $x = y$.  One can realize this by counting the edges of $G$ in two ways. That is, $|E| = k|X|$ and $|E| = k|Y|$ since $G$ is a $k$-regular bipartite graph.  Hence, $kx = ky$ which implies that $x = y$.

Now let $D$ be any $\gamma$-set of $G$ and let $D_X = X \cap D$ and $D_Y = Y \cap D$.  That is, $D_X$ and $D_Y$ partition the vertices in $D$ with respect to the aforementioned bipartition of $V$.  Denoting $x_0 = |D_X|$ and $y_0 = |D_Y|$ we will show that $x_0 = y_0$ and the result will then follow.  
Note that $D$ is an efficient dominating set by Theorem~\ref{thm: Efficient-DS}.  Thus, for the $kx_0$ edges incident on the $x_0$ vertices in $D_X$ we count precisely $kx_0$ distinct vertices belonging to $Y$ that are not in $D_Y$.  Now, since every vertex in $Y$ is either in $D_Y$ or counted by these incident edges, we obtain that $y = y_0 + kx_0$.  Similar remarks yield that $x = x_0 + ky_0$.
Noting that $x = y$ we obtain that $y_0 + kx_0 = x_0 + ky_0$.  Since $k \geq 2$, standard algebraic manipulation yields that $x_0 = y_0$ and the result then follows.
\end{proof}

One obtains the immediate corollary by remarking that $KG_n$ is a $\lfloor \log n \rfloor$-regular bipartite graph that is partitioned based on the parity of the vertices, e.g., even vertices belong to one part and odd vertices belong to the other.

\begin{corollary}
Let $n$ be even and suppose that $\frac{n}{\lfloor \log n \rfloor + 1}$ is an integer.  If $\gamma(KG_n) = \frac{n}{\lfloor \log n \rfloor + 1}$ then  $\frac{n}{\lfloor \log n \rfloor + 1}$ is an even integer.  Moreover, every $\gamma$-set $D$ can be partitioned into two equal sized sets $D_E$ and $D_O$ consisting of even and odd vertices, respectively.  
\end{corollary}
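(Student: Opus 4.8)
The plan is to realize this statement as a direct specialization of the preceding Proposition to the graph $G = KG_n$ with $k = \lfloor \log n \rfloor$. To do so I must verify that $KG_n$ satisfies all the hypotheses of that Proposition, namely that it is $k$-regular with $k \geq 2$ and that it is bipartite, and then invoke the Proposition, being careful to identify the two parts of the bipartition with the even and odd vertices so that the final sentence about $D_E$ and $D_O$ reads off immediately.

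First I would record that $KG_n$ is $\lfloor \log n \rfloor$-regular, which was already observed immediately after Definition~\ref{def : Knodel Graph}, and that $k = \lfloor \log n \rfloor \geq 2$, since $n \geq 6$ forces $\lfloor \log n \rfloor \geq 2$. The integrality of $\frac{n}{\lfloor \log n \rfloor + 1}$ and the assumption $\gamma(KG_n) = \frac{n}{\lfloor \log n \rfloor + 1}$ are precisely the remaining hypotheses of the Proposition, so nothing further needs to be supplied there.

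The one genuine point to check is bipartiteness, and this is where the parity structure enters. Every edge $\{x,y\}$ of $KG_n$ satisfies $x + y \equiv 2^t - 1 \pmod n$ for some $t \in \{1, \dots, \lfloor \log n \rfloor\}$. Since $n$ is even and $2^t - 1$ is odd, the integer $x + y$ is congruent modulo the even number $n$ to an odd number, hence $x + y$ is itself odd. Consequently $x$ and $y$ must have opposite parity, so no edge joins two vertices of the same parity. Taking $X$ to be the set of even vertices and $Y$ the set of odd vertices therefore gives a bipartition of $KG_n$. This is the crux of the argument, modest as it is, since the remaining hypotheses are either given or already known.

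With bipartiteness in hand, the Proposition applies verbatim: it yields that $\frac{n}{\lfloor \log n \rfloor + 1}$ is an even integer and that every $\gamma$-set $D$ splits into two equal halves $D \cap X$ and $D \cap Y$. Finally I would rename these halves $D_E = D \cap X$ and $D_O = D \cap Y$ to match the statement, so that $D_E$ and $D_O$ are the equal-sized collections of even and odd vertices of $D$, completing the proof.
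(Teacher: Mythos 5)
Your proposal is correct and follows exactly the route the paper intends: the corollary is obtained by specializing the preceding Proposition to $KG_n$ with $k = \lfloor \log n \rfloor$, using the parity bipartition. The paper states this in a single remark, whereas you spell out the verification that every edge sum $x+y \equiv 2^t - 1 \pmod n$ is odd (since $n$ is even) and hence joins vertices of opposite parity; this is the same argument, just made explicit.
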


In some sense, this next proposition generalizes the last one.  It is perhaps better understood by considering the contrapositive.  Roughly put, it states that if one were to meet this lower bound of $\lceil \frac{n}{k + 1} \rceil$ and this value is an odd integer, then division of $n$ by $k + 1$ must leave a small remainder.

\begin{proposition}

Let $G = (V, E)$ be a $k$-regular bipartite graph on $n$ vertices where $k \geq 2$ and suppose that $n = 2j(k+1) + r$ with $4 \leq r < k + 1$.  It then follows that $\gamma(G) > 2j + 1 = \big \lceil \frac{n}{k + 1} \big \rceil$.

\end{proposition}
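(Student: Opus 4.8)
The plan is to argue by contradiction, leveraging the lower bound already available. Since $0 < r < k+1$, dividing $n = 2j(k+1)+r$ by $k+1$ shows $\lceil n/(k+1)\rceil = 2j+1$, so Theorem~\ref{thm:3} gives $\gamma(G) \ge 2j+1$. It therefore suffices to rule out equality, i.e.\ to show that no dominating set of size exactly $2j+1$ can exist. So I would assume such a set $D$ exists and aim for a contradiction with the hypothesis $r \ge 4$.

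The main tool will be a double count of how many times each vertex is dominated. As in the previous proposition, the bipartition $X, Y$ of $V$ satisfies $|X| = |Y| = n/2$ by counting edges. Writing $D_X = D\cap X$, $D_Y = D\cap Y$ and $x_0 = |D_X|$, $y_0 = |D_Y|$, so that $x_0 + y_0 = 2j+1$, I would count, for each $v$, the number $d(v)$ of elements of $D$ lying in its closed neighbourhood; domination means $d(v)\ge 1$ everywhere. Because $G$ is bipartite, every neighbour of a $Y$-vertex lies in $X$, so summing over $Y$ gives $\sum_{v\in Y} d(v) = y_0 + kx_0$ (the term $kx_0$ counts the edges from $D_X$ into $Y$), and symmetrically $\sum_{v\in X} d(v) = x_0 + ky_0$. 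Defining the \emph{excesses} $e_Y = (y_0+kx_0) - n/2$ and $e_X = (x_0+ky_0)-n/2$, the inequalities $d(v)\ge 1$ force $e_X, e_Y \ge 0$, while their sum is $e_X + e_Y = (k+1)(x_0+y_0) - n = (k+1)(2j+1)-n = k+1-r$.

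The contradiction then comes from a parity observation combined with the smallness of the total excess. Subtracting the two counts gives $e_Y - e_X = (k-1)(x_0-y_0)$; since $x_0 + y_0 = 2j+1$ is odd, $x_0 - y_0$ is a nonzero (indeed odd) integer, so $|e_Y - e_X| \ge k-1$. On the other hand, since $e_X, e_Y \ge 0$ we have $|e_Y - e_X| \le e_X + e_Y = k+1-r$. Combining, $k-1 \le k+1-r$, i.e.\ $r \le 2$, contradicting $r \ge 4$. Hence no dominating set of size $2j+1$ exists and $\gamma(G) > 2j+1$. I expect the only delicate point to be setting up the double count on the two sides of the bipartition correctly and recognising that the oddness of $x_0 + y_0$ is exactly what forces $x_0 \ne y_0$; once that is in place, the quantitative clash between $|e_Y - e_X| \ge k-1$ and $e_X + e_Y = k+1-r$ is immediate.
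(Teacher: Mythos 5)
Your argument is correct and is essentially the paper's: both proofs count, for each side of the bipartition, the total number of times its vertices are dominated ($y_0+kx_0$ over $Y$ and $x_0+ky_0$ over $X$) and use the oddness of $2j+1$ to force an imbalance under which the side opposite the smaller half of $D$ cannot be fully covered. Your sum-and-difference bookkeeping with the excesses $e_X,e_Y$ is just a more symmetric packaging of the paper's step of taking $|D_X|\le j$ without loss of generality and bounding $kj+(j+1)=j(k+1)+1<\frac{n}{2}$ directly.
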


\begin{proof}
Suppose for the sake of deriving a contradiction that $\gamma(G) = 2j + 1$ and let $D$ be a $\gamma$-set of $G$.  Consider $D_X = D \cap X$ and $D_Y = D \cap X$ where $X$ and $Y$ form a bipartition of $V$.

Since $|D| = 2j + 1$ it follows that at least one of $D_X$ or $D_Y$ has no more than $j$ vertices.  Without loss of generality, assume that $|D_X| \leq j$.  We will show that $D$ cannot possibly dominate all the vertices of $Y$.  

If $|D_X| = j$ then $D_X$ dominates at most $k|D_X| = kj$ vertices of $Y$.  Since there are $j + 1$ vertices in $D_Y$ we see that the number of vertices in $Y$ that $D$ dominates is at most $k|D_X| + (j + 1) = kj + (j + 1) = j(k + 1) + 1 < \frac{n}{2}$.  But we know that $|X| = |Y| = \frac{n}{2}$ from  previous remarks and thus the vertices of $Y$ are not dominated by $D$ contradicting the fact that $D$ is a dominating set.

It is clear from the argument that $D$ cannot be a dominating set whenever $|D_X| < j$.  Therefore, it must be that $\gamma(G) > 2j + 1 = \lceil \frac{n}{k + 1} \rceil$.
\end{proof}
\begin{corollary}
Let $n$ be even and let $k = \lfloor \log n \rfloor$.  If $n = 2j(k + 1) + r$ with $4 \leq r < k + 1$ then $\gamma(KG_n) > 2j + 1 = \big \lceil \frac{n}{\lfloor \log n \rfloor + 1} \big \rceil $. 
\end{corollary}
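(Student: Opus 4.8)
The plan is to recognize this corollary as an immediate specialization of the preceding Proposition to the graph $G = KG_n$ with $k = \lfloor \log n \rfloor$, so that the real work is simply to check that $KG_n$ meets every hypothesis of that Proposition. First I would record that, as observed right after Definition~\ref{def : Knodel Graph}, every vertex of $KG_n$ has degree $\lfloor \log n \rfloor$, so $KG_n$ is $k$-regular with $k = \lfloor \log n \rfloor$. The hypothesis $4 \leq r < k + 1$ forces $k + 1 > 4$, hence $k \geq 4 \geq 2$, so the regularity requirement $k \geq 2$ of the Proposition holds automatically.

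The only structural point that is not a verbatim restatement of an earlier remark is the bipartiteness of $KG_n$, and I would justify it as follows. Suppose $\{x, y\} \in E$, so that $x + y \equiv 2^t - 1 \pmod{n}$ for some $t$. Since $n$ is even while $2^t - 1$ is odd, the sum $x + y$ is odd, and therefore $x$ and $y$ have opposite parity. Consequently every edge joins an even vertex to an odd vertex, so letting $X$ be the set of even vertices and $Y$ the set of odd vertices exhibits $KG_n$ as a bipartite graph. Combined with the previous paragraph, this shows $KG_n$ is a $k$-regular bipartite graph on $n$ vertices with $k \geq 2$, which is exactly the setting of the Proposition.

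With the hypotheses in hand, I would apply the preceding Proposition directly to obtain $\gamma(KG_n) > 2j + 1$, and then confirm the displayed equality. Writing $n = 2j(k+1) + r$ with $4 \leq r < k+1$ gives $n/(k+1) = 2j + r/(k+1)$ with $0 < r/(k+1) < 1$, so $\lceil n/(k+1) \rceil = 2j+1$, which is the asserted value of $\lceil n/(\lfloor \log n \rfloor + 1)\rceil$. I do not anticipate any genuine obstacle here: the statement is labelled a corollary precisely because all of the difficulty was already absorbed into the Proposition, and the one new ingredient, the parity bipartition, is elementary once one uses that $n$ is even while each $2^t - 1$ is odd.
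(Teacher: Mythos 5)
Your proof is correct and follows the same route the paper intends: the corollary is obtained by specializing the preceding proposition to $KG_n$, using exactly the observation the paper makes (that $KG_n$ is $\lfloor \log n\rfloor$-regular and bipartite via the parity of vertices, since $n$ is even while each $2^t-1$ is odd). Your additional checks ($k \geq 2$ and the ceiling computation) are accurate and only make explicit what the paper leaves implicit.
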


\section{Conclusions and Further Research}
Of course the main problem remains and that is to determine $\gamma(KG_n)$ for arbitrary even $n$.  First, we present a few conjectures in order from most likely to least likely.

The first two conjectures seem very likely to be true although the details to verify them may be slightly more cumbersome. In particular, these conjectures would allow one to relax the restrictions in Theorems~\ref{thm:1} and \ref{thm:2} which require that $p$ be a factor of $n$.  The dominating set is likely similar to the dominating sets in these theorems but instead each vertex would be translated by the remainder that $n$ leaves upon division by said prime $p$.

\begin{conjecture}

Let $n$ be even and let $p \leq \lceil \log n \rceil$ be an odd prime such that $2$ is a primitive root modulo $p$.  It then follows that

$$ \gamma(KG_n) \leq \Bigg \lceil \frac{n}{p} \Bigg \rceil. $$

\end{conjecture}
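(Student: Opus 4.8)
The plan is to follow the strategy hinted at just before the statement: keep the dominating set of Theorem~\ref{thm:1} intact on the part of the range that is divisible by $2p$, and repair the short ``boundary block'' that is left over when $p \nmid n$. Write $n = 2pm + r$ with $0 \le r < 2p$; since $n$ is even and $p$ is odd, $r$ is even, and $r = 0$ is exactly the case already settled by Theorem~\ref{thm:1}, so assume $r \neq 0$. Over the first $2pm$ vertices I would take the same interior dominators $\{2pl : 0 \le l \le m-1\} \cup \{2pl - 1 : 1 \le l \le m\}$ as in Theorem~\ref{thm:1}, and then append $\lceil r/p \rceil \in \{1,2\}$ carefully placed dominators to cover the tail $\{2pm, \dots, n-1\}$ together with the vertices near $0$ and $n-1$ whose domination in Theorem~\ref{thm:1} relied on wrapping around modulo $n$. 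The size of the resulting set would be $2m + \lceil r/p \rceil = \lceil n/p \rceil$, matching the target.

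For the verification I would reuse the number-theoretic heart of Theorem~\ref{thm:1} verbatim. For a non-dominator $x = 2pl_0 + m_0$, the fact that $2$ is a primitive root modulo $p$ produces an exponent $i$ with $1 \le i \le p-1$ such that $m_0 + 1 \equiv 2^i$ (or $m_0 \equiv 2^i$) modulo $p$, and hence a dominator $s$ with $x + s = 2^i - 1$ exactly; since $p \le \lceil \log n \rceil$ forces $i \le p - 1 \le \lfloor \log n \rfloor$, this is a legitimate edge of $KG_n$. Every $x$ whose witnessing identity $x + s = 2^i - 1$ holds on the nose (the ``$l_1$'' branch of Theorem~\ref{thm:1}) is dominated by the interior set with no change whatsoever, so the only vertices requiring attention are those that, in Theorem~\ref{thm:1}, were dominated through the congruence $x + s \equiv 2^i - 1 + n$ (the ``$l_2$'' branch).

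The hard part, and here I expect a genuine obstruction rather than a technicality, is precisely this wrap-around. When $2p \mid n$ the step ``add $n$'' is invisible modulo $n$, but when $p \nmid n$ it shifts the relevant residue by $r \not\equiv 0 \pmod{2p}$, so the clean pairing of Theorem~\ref{thm:1} is destroyed in a band of vertices near $0$ and $n-1$, and one must hope the few appended dominators absorb the entire defect. This hope looks safe when $p < \lceil \log n \rceil$: then $\lfloor \log n \rfloor \ge p$, so the extra edge-labels $2^t - 1$ with $p \le t \le \lfloor \log n \rfloor$ are available and supply additional adjacencies to patch the boundary inside the budget. In the tight regime $p = \lceil \log n \rceil$ (with $n$ not a power of two) there are no such spare labels, the degree being exactly $p-1$, and here the bound can actually fail.

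Indeed, a parity-and-overlap count in the spirit of Section~\ref{sec: lowerbounds} is decisive. When $n$ is small enough that the differences $2^t - 2^{t'}$ with $1 \le t, t' \le p-1$ already exhaust the even residues modulo $n$, any two dominators of the same parity have overlapping neighbourhoods and therefore dominate at most $2(p-1) - 1$ vertices of the opposite parity. For $n = 22$ and $p = 5$ this is the case: a dominating set of size $\lceil 22/5 \rceil = 5$ must leave one colour class with at most two dominators, which can reach at most $2(p-1)-1 = 7$ vertices of the other class, whereas eight of them still need covering; hence no dominating set of size $5$ exists and in fact $\gamma(KG_{22}) = 6 > \lceil 22/5 \rceil$. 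So the construction, and the stated bound itself, should only be expected for $p < \lceil \log n \rceil$ (or with $\lceil n/p \rceil$ relaxed by an additive constant); proving the conjecture in that restricted form is the realistic target, and pinning down exactly which tight cases $p = \lceil \log n \rceil$, $p \nmid n$ survive is the remaining difficulty.
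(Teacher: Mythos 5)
This statement is posed in the paper as a \emph{conjecture}: the authors give no proof, only the informal remark that one might translate the dominating set of Theorem~\ref{thm:1} by the remainder of $n$ modulo $p$. So there is no proof of record to compare yours against. Your proposal is also not a proof: the construction for the case $p \nmid n$ is never completed (the ``repair the boundary block with $\lceil r/p\rceil$ extra dominators'' step is exactly the hard part, and you leave it as a hope rather than an argument), so as a proof attempt it has a genuine gap at its central step.

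What you have done instead is more interesting: your closing paragraph appears to be a correct \emph{refutation} of the conjecture as stated. I checked the case $n=22$, $p=5$: here $\lceil \log 22\rceil = 5$, $2$ is a primitive root modulo $5$, and the conjecture would give $\gamma(KG_{22}) \le 5$. The degree is $\lfloor \log 22\rfloor = 4$, the graph is bipartite with $11$ even and $11$ odd vertices, and the set of differences $2^t - 2^{t'} \pmod{22}$ for $1 \le t \ne t' \le 4$ is exactly the set of all ten nonzero even residues, so any two vertices of the same parity share a common neighbour. A dominating set of size $5$ must split as $2+3$ or $3+2$ across the parity classes (the counts $|D_O| + 4|D_E| \ge 11$ and $|D_E| + 4|D_O| \ge 11$ force this), and the class with only two dominators then covers at most $4+4-1 = 7$ vertices of the opposite class, leaving at most $7 + 3 = 10 < 11$ of that class dominated. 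Hence $\gamma(KG_{22}) \ge 6 > \lceil 22/5\rceil$, and the conjecture fails in the tight regime $p = \lceil \log n\rceil$, $p \nmid n$. You should present this as a counterexample rather than as a proof attempt, and note that the conjecture can at best survive with the restriction $p < \lceil \log n\rceil$ or with the bound relaxed by an additive constant.
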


\begin{conjecture}

Let $n$ be even, $p$ be an odd prime and $k \geq 2$ be an integer.  If $\phi(p^k) < \lceil \log n \rceil$ and $2$ is a primitive root modulo $p^k$ then

$$ \gamma(KG_n) \leq \Bigg \lceil \frac{2n}{p^k} \Bigg \rceil. $$

\end{conjecture}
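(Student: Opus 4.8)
The plan is to mirror the construction used in Theorem~\ref{thm:1}, but now partitioning the vertices into residue classes modulo $p^k$ rather than modulo $2p$. I would take
$$D = \{ lp^k \mid 0 \leq l \leq \frac{n}{p^k} - 1\} \cup \{ lp^k - 1 \mid 1 \leq l \leq \frac{n}{p^k} \},$$
observe that the two sets are disjoint (they consist of the residue classes $0$ and $p^k-1$ modulo $p^k$, which differ since $p^k \geq 9$), so that $|D| = \frac{2n}{p^k}$, and then argue that every $x \notin D$ has a neighbour in $D$. Writing such an $x$ as $x = l_0 p^k + m_0$ with $0 \leq l_0 \leq \frac{n}{p^k}-1$ and $1 \leq m_0 \leq p^k - 2$, the goal for each $x$ is to produce $s \in D$ and an index $i$ with $x + s \equiv 2^i - 1 \pmod n$ and $1 \leq i \leq \lfloor \log n \rfloor$, which by Definition~\ref{def : Knodel Graph} makes $s$ adjacent to $x$.

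The engine of the argument is that $2$ is a primitive root modulo $p^k$: any residue coprime to $p^k$ equals $2^i \bmod p^k$ for a unique $1 \leq i \leq \phi(p^k)$. Of the two consecutive integers $m_0$ and $m_0 + 1$, at most one is divisible by $p$, so at least one is coprime to $p^k$; I would split into the case $\gcd(m_0+1, p^k) = 1$, writing $m_0 = 2^i - 1 + jp^k$ and aiming at the target $2^i - 1$ via an element $s = lp^k$ of the first part of $D$, and the case $\gcd(m_0, p^k)=1$, writing $m_0 = 2^i + jp^k$ and using $s = lp^k - 1$ from the second part. In either situation a direct computation gives $x + s = (l_0 + l + j)p^k + (2^i - 1)$, so it suffices to choose $l$ with $l_0 + l + j \equiv 0 \pmod{n/p^k}$; the two natural candidates are $l_1 = -(j + l_0)$ and $l_2 = \frac{n}{p^k} - (j + l_0)$. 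Crucially, the parity of $x$ together with the parities of $l_0$ and $m_0$ forces which part of $D$ to use, and one checks that $j$ then has the parity making the chosen $s$ the correct parity for $x + s$ to equal the odd number $2^i - 1$; this is the bookkeeping that replaces the single parity split of Theorem~\ref{thm:1} with four subcases on $(l_0 \bmod 2, m_0 \bmod 2)$.

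The step I expect to be the main obstacle is verifying that at least one of $l_1, l_2$ actually lies in the admissible range $[0, \frac{n}{p^k}-1]$, since this is where the hypothesis $\phi(p^k) < \lceil \log n \rceil$ is consumed. Since $i \leq \phi(p^k) < \lceil \log n \rceil$ forces $i \leq \lfloor \log n \rfloor$ and hence $2^i \leq n$ (this also confirms $2^i - 1$ is a legitimate edge label), while $2^i \geq m_0 + 1$ gives $j \leq 0$, I would show $j$ satisfies $-\frac{n}{p^k} < j \leq 0$; combined with $0 \leq l_0 \leq \frac{n}{p^k}-1$ this squeezes $-(j+l_0)$ into $[-\frac{n}{p^k}+1, \frac{n}{p^k}-1]$, so that $l_1$ is used when it is nonnegative and $l_2 = l_1 + \frac{n}{p^k}$ otherwise, with exactly one landing in range. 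Finally, the case where $x$ is even is entirely symmetric, swapping the roles of the two parts of $D$, so it is dispatched by the same computation, completing the proof that $D$ dominates $KG_n$ and therefore $\gamma(KG_n) \leq \frac{2n}{p^k}$.
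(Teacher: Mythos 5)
The statement you are proving is stated in the paper as a \emph{conjecture}, not a theorem, and the paper offers no proof of it. The crucial difference between this conjecture and Theorem~\ref{thm:2} (which the paper does prove) is that the conjecture drops the hypothesis that $p^k$ divides $n$ --- that is exactly why the bound acquires a ceiling, $\lceil 2n/p^k\rceil$. Your proposal silently reinstates that hypothesis: the set $D = \{ lp^k \mid 0 \leq l \leq \frac{n}{p^k} - 1\} \cup \{ lp^k - 1 \mid 1 \leq l \leq \frac{n}{p^k} \}$ is only well defined when $\frac{n}{p^k}$ is an integer, and your count $|D| = \frac{2n}{p^k}$ likewise presumes divisibility. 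What you have written is essentially the paper's own proof of Theorem~\ref{thm:2}, not an argument for the conjectured generalization.

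The divisibility is not a cosmetic omission; it is load-bearing at the one step you flag as the main obstacle. Your wrap-around candidate is $l_2 = l_1 + \frac{n}{p^k}$, and the computation $x + s = 2^i - 1 + n \equiv 2^i - 1 \pmod n$ works precisely because $l_2 p^k = l_1 p^k + n$, i.e.\ because $n$ is a multiple of $p^k$. If $p^k \nmid n$, the multiples of $p^k$ taken modulo $n$ do not stay within the residue class $0 \bmod p^k$: wrapping past $n$ shifts you by $-(n \bmod p^k)$, so the \quo{second chance} element misses the target residue $2^i - 1$ entirely. This is exactly the difficulty the authors acknowledge when they say the dominating set for the conjecture would need each vertex \quo{translated by the remainder that $n$ leaves upon division by said prime}, and why they leave the statement open. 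To make progress you would need to define $D$ via roughly $\lceil n/p^k\rceil$ blocks with an explicit correction at the seam where the indices wrap modulo $n$, verify domination for the vertices near that seam (where the clean congruence argument breaks), and also check that the corrected set still has at most $\lceil 2n/p^k\rceil$ elements rather than $2\lceil n/p^k\rceil$. None of that is in your proposal, so there is a genuine gap.
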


Lastly, the final conjecture is that $\gamma(KG_n)$ is in fact very close to the trivial lower bound in Theorem~\ref{thm:3}. Perhaps, with some small additive constant. Due to the symmetry of $KG_n$ we expect this to be true, yet believe that the dominating set would not be as neatly described as it were in Theorems~\ref{thm:1} and \ref{thm:2}. Moreover, the known results (where the exact domination number is established) seem to agree with this conjecture.

\begin{conjecture}

Let $n$ be even with $\lfloor \log n \rfloor = m$.  It then follows that

$$ \Bigg \lceil \frac{n}{m + 1} \Bigg \rceil \leq \gamma(KG_n) \leq \Bigg \lceil \frac{n}{m + 1} \Bigg \rceil + O(\log(n)). $$

\end{conjecture}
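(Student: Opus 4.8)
Since $KG_n$ is $\lfloor\log n\rfloor$-regular, the lower bound is nothing more than Theorem~\ref{thm:3}; the entire difficulty lies in the upper bound $\gamma(KG_n)\le\lceil n/(m+1)\rceil+O(\log n)$. The plan is to reduce this to an additive covering problem on $\mathbb{Z}_N$, $N=n/2$, and then to exhibit a dominating set that is \emph{efficient up to $O(\log n)$ defects}.

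First I would use the bipartition of $KG_n$ into even and odd vertices. Writing an even vertex as $2a$ and an odd vertex as $2c-1$ with $a,c\in\mathbb{Z}_N$, the relation $2a+(2c-1)\equiv 2^t-1\pmod n$ becomes $a+c\equiv 2^{t-1}\pmod N$. Hence $KG_n$ is the bipartite double of the Cayley-sum graph $G_N:=\mathrm{CayS}(\mathbb{Z}_N,A)$ with connection set $A=\{2^0,2^1,\dots,2^{m-1}\}\bmod N$ and $m=\lfloor\log N\rfloor+1=\lfloor\log n\rfloor$. A direct check shows that if $B\subseteq\mathbb{Z}_N$ dominates $G_N$ then placing a copy of $B$ on each side dominates $KG_n$, so $\gamma(KG_n)\le 2\gamma(G_N)$; since $G_N$ has maximum degree $m$, it suffices to prove $\gamma(G_N)\le N/(m+1)+O(\log N)$. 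This framework already recovers Theorem~\ref{thm: KnodelOriginal}: when $p=m+1$ is prime and $2$ is a primitive root modulo $p$, the set $B=p\mathbb{Z}_N$ of multiples of $p$ is an \emph{efficient} dominating set of $G_N$, because the residues $\{2^s\bmod p\}$ run through all of $(\mathbb{Z}/p)^\times$ exactly once.

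In $G_N$ a vertex $b$ dominates precisely $\{b\}\cup\{2^s-b:0\le s\le m-1\}$, a set of at most $m+1$ elements, so attaining the value $N/(m+1)$ is the same as these closed neighbourhoods tiling $\mathbb{Z}_N$, i.e.\ the existence of a perfect code (cf.\ Theorem~\ref{thm: Efficient-DS}). For most $N$ one has $(m+1)\nmid N$ and no perfect code exists, so I would instead aim for a dominating set that is efficient off an exceptional set of size $O(\log N)$. Concretely, I would fix a modulus $d$ comparable to $m+1$ together with residues $c_1,\dots,c_k$ and take $B$ to be the periodic pattern cut out of $[0,N)$ by $\bigcup_i(c_i+d\mathbb{Z})$. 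The domination condition then \emph{decouples modulo $d$}: an interior vertex $z$ is dominated iff $z\equiv c_i$ or $2^s\equiv z+c_i\pmod d$ for some $i,s$, i.e.\ iff the reflected power-of-two blocks $\{c_i\}\cup(\{2^s\bmod d\}-c_i)$ cover $\mathbb{Z}_d$. If this finite cover is \emph{perfect}, with total size $k(m+1)=d$, then the interior contributes $|B|\approx kN/d=N/(m+1)$, and the seam at $N\equiv 0$ — where the largest powers wrap around and misalign the pattern — spoils only $O(md)=O(\log^2 N)$ vertices, which can be re-dominated by $O(\log N)$ further vertices.

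The main obstacle is exactly this modular covering step, and it is where the primitive-root hypotheses of Theorems~\ref{thm:1} and~\ref{thm:2} were doing the real work. If $2$ is not a primitive root modulo $d$, the powers $\{2^s\bmod d\}$ miss a positive proportion of the residues; and a single residue class left uncovered already forces $\approx N/d$ extra dominators, far beyond the permitted $O(\log N)$. One therefore cannot tolerate even one uncovered class — the modular cover must be essentially perfect for \emph{every} $m$ — and for moduli where $2$ has small order (or for $N$, such as a prime, possessing no divisor near $m+1$, so that no periodic pattern aligns) this seems to require genuine input about the joint additive--multiplicative distribution of the powers of two. No generic argument supplies it: the standard probabilistic/greedy covering bound yields only $\gamma(G_N)\le(1+\ln(m+1))\,N/(m+1)$, and it is precisely the multiplicative $\log$ factor that must be removed. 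Controlling this defect uniformly in $m$ — perhaps by letting $d$ range over a short band of candidate moduli, or by choosing the offsets $c_i$ according to the orbit structure of $2$ modulo $d$ — is the crux, and is the reason the statement is offered only as a conjecture.
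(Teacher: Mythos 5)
The statement you are addressing is offered in the paper only as a conjecture --- the paper contains no proof of it --- so the only portion of your argument that can be checked against anything the authors actually establish is the lower bound, which, as you say, is immediate from Theorem~\ref{thm:3} because $KG_n$ is $\lfloor\log n\rfloor$-regular. Your reduction of the upper bound is sound as far as it goes: the substitution of $2a$ for even vertices and $2c-1$ for odd ones does turn the adjacency condition into $a+c\equiv 2^{t-1}\pmod{n/2}$, the inequality $\gamma(KG_n)\le 2\gamma(G_N)$ is correct, and your observation that the set $\{2pl\}\cup\{2pl-1\}$ of Theorem~\ref{thm:1} is exactly two copies of $p\mathbb{Z}_N$ in this picture is an accurate structural reformulation of what the paper's constructions are really doing.

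As a proof, however, the proposal has a genuine gap, and you name it yourself: everything hinges on producing, for \emph{every} $m=\lfloor\log n\rfloor$ and every $N=n/2$, a modulus $d$ close to $m+1$ and offsets $c_1,\dots,c_k$ for which the translated power-of-two blocks cover $\mathbb{Z}_d$ perfectly, or with $O(1)$ defect. You give no such construction outside the primitive-root situations already handled by Theorems~\ref{thm:1} and~\ref{thm:2}, and you correctly observe that even one uncovered residue class forces $\Theta(N/d)$ additional dominators, which obliterates the conjectured additive $O(\log n)$ error term; the generic greedy covering argument yields only a multiplicative $1+\ln(m+1)$ loss. The decisive combinatorial step is therefore missing, which is consistent with the authors leaving the statement as a conjecture. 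What you have written is a clean and potentially useful reformulation of the problem --- reducing it to an additive covering question about powers of two modulo small integers --- but it is not a proof, and it should not be presented as one.
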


Finally, we leave some problems of interest whose answer may be insightful in determining $\gamma(KG_n)$ and perhaps even in the general problem of domination in graphs.
\begin{question}
The necessary conditions presented in Section~\ref{sec: lowerbounds} are based on general arguments for $k$-regular bipartite graphs.  Although these arguments are appreciated for their own sake they do not incorporate the algebraic structure of $KG_n$.  Can we strengthen the necessary conditions presented in Section $3$ or simply find better lower bounds on $\gamma(KG_n)$?
\end{question}

\begin{question}
The upper bounds presented in Section~\ref{sec:upperbounds} impose the restriction that an odd prime factor of $\lceil \log n \rceil$ must have $2$ as a primitive root.  Can we relax this constraint?  An idea may be to construct a dominating set based on the order of $2$ modulo some chosen prime $p$.
\end{question}

\begin{question}
Can some of the techniques used in this paper be applied to other classes of graphs that exhibit a similar algebraic structure?  
\end{question}

\acknowledgements
\label{sec:ack}
We thank Andrew Granville for help with the analytic number theory results and  the anonymous referees for their very valuable suggestions and corrections that  improved the results and the presentation of this paper.

\nocite{*}
\bibliographystyle{abbrvnat}
\bibliography{sample-dmtcs}

\begin{thebibliography}{16}
\providecommand{\natexlab}[1]{#1}
\providecommand{\url}[1]{\texttt{#1}}
\expandafter\ifx\csname urlstyle\endcsname\relax
  \providecommand{\doi}[1]{doi: #1}\else
  \providecommand{\doi}{doi: \begingroup \urlstyle{rm}\Url}\fi

\bibitem[Berge(1962)]{Berge}
C.~Berge.
\newblock \emph{{\it Theory of Graphs and its Applications}}.
\newblock Methuen, London, 1962.

\bibitem[Bermond(1997)]{ModifiedKnodelDef}
J.~Bermond.
\newblock {\it A note on the dimensionality of modified Knödel graphs}.
\newblock \emph{International Journal of Foundations of Computer Science},
  8\penalty0 (02):\penalty0 109–116, 1997.

\bibitem[Burton(2006)]{NumberTheory}
D.~Burton.
\newblock \emph{{\it Elementary number theory}}.
\newblock Tata McGraw-Hill Education, 2006.

\bibitem[Fertin and Raspaud(2004)]{KnodelSurvey}
G.~Fertin and A.~Raspaud.
\newblock {\it A survey on Kn\"odel graphs}.
\newblock \emph{Discrete Applied Mathematics}, 137\penalty0 (2):\penalty0
  173–195, 2004.

\bibitem[Garey and Johnson(1979)]{NPComplete}
M.~Garey and D.~Johnson.
\newblock \emph{{\it Computers and intractability}}, volume 174.
\newblock Freeman, San Francisco, 1979.

\bibitem[Harutyunyan and Liestman(2012)]{HHandAL}
H.~Harutyunyan and A.~Liestman.
\newblock {\it Upper bounds on the broadcast function using minimum dominating
  sets}.
\newblock \emph{Discrete Mathematics}, 312\penalty0 (20):\penalty0 2992–2996,
  2012.

\bibitem[Haynes et~al.(1998)Haynes, Hedetniemi, and Slater]{Fundamentals}
T.~Haynes, S.~Hedetniemi, and P.~Slater.
\newblock \emph{{\it Fundamentals of domination in graphs}}.
\newblock CRC press, 1998.

\bibitem[Khachatrian and Harutounian(1990)]{OriginalModifiedKnodelDef}
L.~Khachatrian and O.~Harutounian.
\newblock {\it Construction of new classes of minimal broadcast networks}.
\newblock In \emph{Conference on Coding Theory}, Armenia, 1990.

\bibitem[Kn\"odel(1975)]{KnodelGossip}
W.~Kn\"odel.
\newblock {\it New gossips and telephones}.
\newblock \emph{Discrete Mathematics}, 13:\penalty0 95, 1975.

\bibitem[Li and Pomerance(2002)]{ArtinConjecture}
S.~Li and C.~Pomerance.
\newblock {\it Primitive roots: a survey}.
\newblock In \emph{Number theoretic methods (Iizuka, 2001), Dev. Math,}, page
  219–231. Kluwer Acad. Publ., 2002.

\bibitem[Mojdeh et~al.(2018)Mojdeh, Musawi, and Nazari]{mojdeh2018domination}
D.~Mojdeh, S.~Musawi, and E.~Nazari.
\newblock {\it Domination in 4-regular Knödel graphs}.
\newblock \emph{Open Mathematics}, 16\penalty0 (1):\penalty0 816–825, 2018.

\bibitem[Mojdeh et~al.(2019)Mojdeh, Musawi, and Nazari]{mojdeh2019domination}
D.~Mojdeh, S.~Musawi, and E.~Nazari.
\newblock {\it Domination critical Knödel graphs}.
\newblock \emph{Iranian Journal of Science and Technology, Transactions A:
  Science}, 43\penalty0 (5):\penalty0 2423–2428, 2019.

\bibitem[Nader and al.(2021)]{jafari2021total}
J.~Nader and al.
\newblock {\it Total domination in cubic Knödel graphs}.
\newblock \emph{Communications in Combinatorics and Optimization}, 6\penalty0
  (2):\penalty0 221–230, 2021.

\bibitem[Sloane(2020)]{OEIS}
N.~J.~A. Sloane.
\newblock {\it The on-line encyclopedia of integer sequences}, 2020.
\newblock URL \url{http://oeis.org/?language=english}.

\bibitem[Varghese et~al.(2018)Varghese, Vijayakumar, and
  Hinz]{varghese2018power}
S.~Varghese, A.~Vijayakumar, and A.~Hinz.
\newblock {\it Power domination in Knödel graphs and Hanoi graphs}.
\newblock \emph{Discussiones Mathematicae Graph Theory}, 1:\penalty0 63–74,
  2018.

\bibitem[Xueliang and al.(2009)]{xueliang2009domination}
F.~Xueliang and al.
\newblock {\it On the domination number of Knödel graph $W(3, n)$}.
\newblock \emph{International Journal of Pure and Applied Mathematics},
  50\penalty0 (4):\penalty0 553–558, 2009.

\end{thebibliography}
\label{sec:biblio}

\end{document}